\documentclass[ASNA,onecolumn]{USG}
\usepackage{anyfontsize}
\usepackage{colortbl}
\usepackage{xcolor}
\usepackage{steinmetz}
\usepackage{float}
\usepackage{amsmath,amsfonts}
\usepackage{amssymb}
\usepackage{algorithm}
\usepackage{algpseudocode}
\usepackage{array}
\usepackage{caption}
\usepackage{booktabs}
\usepackage{enumitem}
\usepackage{graphicx}
\usepackage{amsthm}

\hypersetup{hypertexnames=false}

\graphicspath{{./images/}}
\specialissue{Morphologic Hard and Soft Tissue Changes following Tooth
Extraction in Different Maxilla and Mandible Anatomic Areas: Current
Knowledge, Prognostic Factors Analysis, and Clinical Management}

\articletype{RESEARCH ARTICLE}%
\subarticletype{Particle Technology and Fluidization}

\received{}
\revised{}
\accepted{}
\journal{Mathematical Methods in the Applied Sciences}
\volume{0}
\copyyear{2026}
\startpage{1}
\articledoi{}

\begin{document}
\title{Manifold Fractional Harmonic Transform for 3D Point Clouds}
\transtitle{Guidelines for Establishing a Cytometry Laboratory}
\subtranstitle{trans-subtitle}
\author[1,2]{Jiamian Li}[https://orcid.org/0009-0006-2633-4705]
\author[1,2]{Bing-Zhao Li}[https://orcid.org/0000-0002-3850-4656]





















\authormark{TAYLOR \textsc{et al.}}
\titlemark{PLEASE INSERT YOUR ARTICLE TITLE HERE}

\address[1]{\orgdiv{School of Mathematics and Statistics, }\orgname{Beijing Institute of Technology, }%
\orgaddress{\state{Beijing 100081, }\country{China}}}

\address[2]{\orgdiv{Beijing Key Laboratory on MCAACI,}\orgname{Beijing Institute of Technology, }%
\orgaddress{\state{Beijing 100081, }\country{China}}}



\corres{Bing-Zhao Li  (\email{li\_bingzhao@bit.edu.cn})}

\editor{\textbf{Academic Editor:}   ~|~ \textbf{Guest Editor:}  }

\presentaddress{This is sample for present address text this is sample for present address text.}

\fundingInfo{
Natural Science Foundation of Beijing Municipality, China, Grant/Award Number: 4242011.}

\keywords{fractional Fourier transform | Riemannian manifold | harmonic analysis | convolution theorem | point cloud | clutter suppression}

\transkeywords{DEM | flexible cylindrical particle | internal friction angle | particle deformation | shear stress}

\abstract[ABSTRACT]{Point clouds can be regarded as discrete samples of smooth manifolds and are typically analyzed via the eigenfunctions of the Laplace-Beltrami operator. This paper extends manifold spectral analysis to the fractional domain, enabling continuous interpolation between the spatial and spectral domains for point cloud data. First, a point cloud manifold fractional harmonic transform (PMFHT) is proposed, with its fundamental properties rigorously derived, along with the associated convolution, correlation, and sampling theorems. These theoretical results establish a solid foundation for stable fractional-order spectral representation on manifolds. Second, within the PMFHT framework, two representative algorithms are developed. On the one hand, by integrating multi-order PMFHT with chaotic phase modulation, a point cloud encryption scheme is constructed, characterized by a large key space and high sensitivity to key perturbations. On the other hand, an optimal filter is designed in the fractional manifold spectral domain, leading to a maritime target detection method specifically tailored for point cloud data, which effectively suppresses sea clutter while preserving weak target energy under low signal-to-clutter ratio conditions. Finally, experiments on measured data validate the effectiveness of the proposed algorithms.}

 \transabstract[transABSTRACT]{This is a generic template designed for use by multiple journals,
which includes several options for customization. Please
refer the author guidelines and author LaTeX manuscript preparation document for the journal to which you
are submitting in order to confirm that your manuscript will
comply with the journal’s requirements. Please replace this
text with your abstract. This is sample abstract text just for the template display purpose.}

\abbr{5-FU, 5-fluorouracil; CFD, computational fluid dynamics; CH, channel; EFS, event-free survival; GBM, glioblastoma multiforme; OS, overall survival; PFS, progression-free
survival; SD, standard deviation.}

\contributed{Hao Zhang and Pengyue D. Guo contributed equally to this study.}

\dedicated{Dedicated to Srivivasa Ramanujano on the occasion of his 125th birth anniversary.}

\copyright{
  }


\maketitle


\section{Introduction}\label{sec1}

Point cloud data is a collection of discrete points in three-dimensional (3D) space. Each point contains its spatial coordinates $(x, y, z)$ and may contain information such as color and reflectivity. Point clouds are typically generated via active or passive sensing modalities, such as LiDAR and photogrammetry \cite{han2017review},\cite{zhang2023deep}. 
As a fundamental 3D representation method, point clouds have been widely used in fields such as autonomous driving, object detection, and digital twins due to their simplicity, flexibility, and powerful expressiveness \cite{guo2020deep}, which has also promoted the rapid development of the research field of point cloud processing.
Manifolds are foundational constructs in differential geometry and topology, characterizing spaces that are locally Euclidean yet possess intricate global structures \cite{lee2003smooth}. In physical contexts, smooth and continuous object surfaces are frequently modeled as 2D manifolds. Consequently, point clouds can be viewed as discretized samplings of these underlying continuous manifolds \cite{MARTIN2011427}, \cite{lee2022statistical}, \cite{pauly2001spectral}. The framework of point cloud manifolds bridges the gap between unordered discrete points and continuous differential geometry, facilitating the integration of manifold-based analytical tools into point cloud processing.

Vallet et al. introduced a seminal mesh-based framework for spectral geometry processing using manifold harmonics, which constructs an orthonormal basis from the eigenfunctions of the Laplace–Beltrami operator (LBO) defined on triangulated surfaces \cite{vallet2008spectral}. Building upon discrete exterior calculus, their manifold harmonic transform (MHT) provides a Fourier-like spectral representation that enables geometry-aware filtering, compression, and various shape analysis tasks. This foundational work laid the theoretical groundwork for spectral analysis on geometric domains.
Liu et al. achieved a significant advance by extending spectral geometry processing from meshes to point-sampled manifolds and introducing a point-based manifold harmonic basis \cite{liu2012point}. Their method constructs a convergent and symmetrizable discrete LBO via local Voronoi cell estimation on tangent planes, ensuring orthogonality of the resulting basis functions without requiring global mesh connectivity. The resulting point-based manifold harmonic transform enables direct spectral analysis of point clouds, supporting tasks such as spectral filtering, feature extraction, and noise removal. 
This seminal contribution laid the foundation for harmonic analysis on point clouds, offering provably improved convergence behavior and greater geometric fidelity than graph-based or naively symmetrized Laplacian operators.
In this work, we adopt the terms point cloud manifold harmonic bases (PMHB) and point cloud manifold harmonic transform (PMHT) to emphasize the focus on point cloud data.

The fractional Fourier transform (FRFT) is a generalized form of the classical Fourier transform and can be interpreted as rotating a signal in the time–frequency plane by an arbitrary angle. The traditional Fourier transform maps a signal from the time domain to the frequency domain, equivalent to a 90° rotation, whereas the FRFT allows rotation by arbitrary angles, revealing intermediate states between the time and frequency domains (i.e., fractional domains). This enables simultaneous characterization of both time domain and frequency domain properties, providing a powerful and flexible tool for nonstationary signal analysis \cite{BULTHEEL2004182}, \cite{CHEN202171}, \cite{FU2023211}. 
Motivated by this, we extend PMHT to the fractional domain. Specifically, we propose the point cloud manifold fractional harmonic transform (PMFHT), which constructs a continuous fractional spectral domain between the spatial and frequency domains by nonlinearly scaling the eigenvalues of LBO, thereby enabling a more flexible spectral representation.

Point cloud data, due to its inclusion of 3D structural information, has been widely applied in surveying, remote sensing, and autonomous driving. However, research on information security and clutter suppression for 3D point clouds remains relatively limited.
In information security, FRFT-based cryptosystems offer a larger key space compared to traditional Fourier domain methods by using the transform order as the key. For example, Tao et al. used multi-order FRFT to encrypt images by summing interpolated sub-images in different fractional domains, significantly improving the system's resistance to brute-force attacks \cite{5551197}. Ben Farah et al. proposed a triple FRFT-based encryption algorithm that integrates encoding with exclusive-or operations and a Lorenz chaotic map to achieve high sensitivity and strong obfuscation \cite{FARAH2020105777}. With the increasing prevalence of 3D data in sensitive applications, Liu et al. proposed a novel privacy protection scheme for 3D point cloud classification based on optical chaos generated by mutually coupled lasers \cite{Liu:23}. This method employs both permutation and diffusion processes to ensure that the geometric features of the point cloud are effectively hidden. 
In the field of clutter suppression, taking sea clutter as an example, current research mainly includes:
Graph neural network-based methods represent radar data as graph-structured signals to utilize their inherent spatiotemporal correlations. Experiments show that such methods outperform traditional convolutional neural network models, demonstrating that graph representations can better utilize contextual spatiotemporal information to handle the complexity of sea clutter and the similarity between targets and clutter \cite{ru2023marine}, \cite{su2024radar}.
Another research direction adopts an information geometry perspective, enhancing feature robustness by revealing the inherent geometric structure in the data. Radar observation data is modeled as points on a manifold, and a constant false alarm rate detector is designed on the resulting Riemannian manifold. Then, targets and clutter are distinguished by calculating geometric distances on the manifold \cite{hua2021target}, \cite{cao2021nonstationary}, \cite{li2025novel}. 
In FRFT-based weak target detection in the ocean, the dispersed target energy is concentrated into a peak on the time-frequency plane by selecting the optimal FRFT, thus achieving target detection under low signal-to-clutter ratio (SCR) conditions. To suppress tsunami pulses in sea clutter, these methods typically integrate cancellation, filtering, or data selection mechanisms to remove clutter components inconsistent with target characteristics \cite{chen2013detection}, \cite{gao2021weak}.

However, these clutter suppression methods are difficult to directly apply to 3D point cloud data.
Building upon the proposed PMFHT theoretical framework, this paper leverages the multi-order and chaotic characteristics of the forward and inverse transforms to develop a novel encryption algorithm, significantly enlarging the key space and enhancing information security. 
Furthermore, based on the minimum mean square error (MSE) criterion, this paper designs an optimal fractional domain filter suitable for 3D point clouds. By determining the optimal fractional transform order, the proposed method outperforms traditional methods, effectively suppressing heterogeneous sea clutter and significantly improving the SCR.

This article is organized as follows. Section~\ref{sec:2} reviews the fundamentals of point cloud manifold harmonic analysis and the FRFT. Section~\ref{sec:3} introduces PMFHT, including its definition,  properties, and theorems. Section~\ref{sec:4} constructs the 3D point cloud encryption algorithm based on PMFHT and the optimal fractional domain filtering framework under the minimum MSE criterion. Section~\ref{sec:6} presents the application experiments, where the effectiveness of the two proposed algorithms is validated using available real-world datasets.
Finally, Section~\ref{sec:7} concludes the paper.
 
 \section{Preliminaries}\label{sec:2}

\subsection{Point Cloud Manifold Harmonic Analysis}
PMHT extends the classical manifold harmonic framework to point clouds, enabling spectral analysis without requiring explicit mesh connectivity~\cite{liu2012point}. 
The resulting bases are orthogonal, which is ensured by constructing a symmetrizable discrete LBO directly on the point clouds.
\begin{definition}
For a compact Riemannian manifold ${M}$ without boundary, the LBO is defined as:
\begin{equation}
\Delta_{{M}} f = \mathrm{div} \, \mathrm{grad} \, f.
\end{equation}

The eigenvalue problem of the LBO is given by:
\begin{equation}
\Delta_{{M}} H = -\lambda H,
\end{equation}
where $\lambda$ and $H$ are the eigenvalue and eigenfunction, respectively, and the negative sign is introduced to ensure that all eigenvalues $\lambda$ are nonnegative. 
The eigenfunctions form an orthonormal basis with respect to the $L^2(M)$ inner product:
\begin{equation}
\langle H_i, H_j \rangle = \delta_{ij},
\end{equation}
where $H_i$ and $H_j$ are the eigenfunctions corresponding to the $i^{\text{th}}$ and $j^{\text{th}}$ eigenvalues, respectively, and $\delta_{ij}$ is the Kronecker delta function, satisfying $\delta_{ij} = 1$ if $i = j$ and $\delta_{ij} = 0$ otherwise.

Given a point cloud $P = \{p_1,\dots,p_N\}$ sampled from ${M}$, the construction of the PMHB begins with the pointwise approximation of the LBO $\Delta_{{M}} f(p)$. The continuous integral representation of $\Delta_{{M}}$ can be written as \cite{BELKIN20081289}:

\begin{equation}
		\Delta_{{M}} f(p)= 
		\lim_{t \to 0} \frac{1}{4 \pi t^2} \Bigg[
		\int_{{M}} e^{-\frac{\|p-y\|^2}{4t}} f(p) d\mu(y) - \int_{{M}} e^{-\frac{\|p-y\|^2}{4t}} f(y) d\mu(y)
		\Bigg].
\end{equation}
 
%
For each point $p \in P$, estimate its local tangent plane $\hat{T}_p$
by finding the best-fitting plane of its $r$-neighborhood
$P_r = P \cap B(p, r)$, where $B(p,r)$ denotes the ball centered at
$p$ with radius $r = 10\varepsilon$ ($\varepsilon > 0$ ).
The choice $r = 10\varepsilon$ ensures that $\hat{T}_p$ converges to the true tangent plane as the sampling
becomes denser, as proved in Belkin et al.\ \cite{Belkin2009}.
Next, project the neighbors $P_\delta = P \cap B(p, \delta)$, with
$\delta \ge 10\varepsilon$, onto $\hat{T}_p$ and compute their Euclidean
Voronoi diagram on $\hat{T}_p$. The area of the Voronoi cell
$\mathrm{vol}\!\left(\mathrm{Vor}_{\hat{T}_p}(p)\right)$ serves as an
approximation of the local area element around $p$ on ${M}$.

The discrete approximation of $\Delta_{{M}} f(p)$ is then:
\begin{equation}
\hat{\Delta}^{\,t}_{P} =
\frac{1}{4 \pi t^2} \sum_{q \in P_\delta}
e^{-\frac{\|q-p\|^2}{4t}}
\bigl(f(q) - f(p)\bigr) \, \mathrm{vol}\bigl(\mathrm{Vor}_{\hat{T}_q}(q)\bigr),
\label{eq:discrete-lbo}
\end{equation}
where $\mathrm{vol}\!\left(\mathrm{Vor}_{\hat{T}_q}(q)\right)$ denotes the
area of the Voronoi cell of $q$ computed on its own estimated tangent
plane $\hat{T}_q$.
As the sampling becomes denser and $t(\varepsilon) = \varepsilon^{1/2+a}$ with $a>0$, $\hat{\Delta}^{\,t}_{P} f(p)$ converges pointwise to 
$\Delta_{{M}} f(p)$. 

Collecting \eqref{eq:discrete-lbo} for all $p_i \in P$ yields the matrix form:
\begin{equation}
\hat{\Delta}^{\,t}_{P} f =\hat{L}^{\,t}_{P} f,
\label{6}
\end{equation}
where $\hat{L}^{\,t}_{P} = B^{-1} {Q}$ is symmetrizable with:
\begin{equation}
\begin{aligned}
q_{ij} &= 
\frac{\mathrm{vol}(\mathrm{Vor}_{\hat{T}_{p_i}})\mathrm{vol}(\mathrm{Vor}_{\hat{T}_{p_j}})}{4 \pi t^2}
\exp\left(-\frac{\|p_i - p_j\|^2}{4t}\right), i \neq j \\
q_{ii} &= - \sum_{j \neq i} q_{ij},\quad
b_{ii} = \mathrm{vol}(\mathrm{Vor}_{\hat{T}_{p_i}}).
\end{aligned}
\end{equation}
\end{definition}
Solving the generalized eigenproblem:
\begin{equation}
Q H = -\lambda B H,
\end{equation}
yields the eigenvalues $\{\lambda_i\}$ and the corresponding eigenvectors $\{H_i\}$. 
The eigenvectors, namely the PMHB, serve as a set of basis functions that enable a Fourier-like spectral decomposition of any function $f$ sampled on $P$:

\begin{equation}
\begin{aligned}
\tilde{f}_i &= \langle f, H_i \rangle = f^\top {B} H_i, \\
f &= \sum_i \tilde{f}_i H_i.
\end{aligned}
\end{equation}


\subsection{The Fractional Fourier Transform}
\begin{definition}
The FRFT of a signal \(x(t)\) is defined as \cite{Kamalakkannan}:
\begin{equation}
X_p(u) = \mathcal{F}_p[x](u) = \int_{-\infty}^{+\infty} x(t) \, K_p(t,u) \, dt,
\tag{10}
\label{10}
\end{equation}
where
$
K_p(t,u) = \sqrt{1 - j \cot \beta}\;
\exp\left\{ j\pi \left( t^2 \cot \beta - 2tu \csc \beta + u^2 \cot \beta \right) \right\},
$
with \(\beta = p\pi/2\) and \(\beta \neq n\pi\).  
For special cases of \(\beta\), the kernel becomes
$
K_p(t,u) =
\begin{cases}
\delta(t-u), & \beta = 2n\pi,\\[4pt]
\delta(t+u), & \beta = (2n \pm 1)\pi.
\end{cases}
$
\end{definition}
\begin{remark}
Here, \(\beta\) represents the rotation angle of the FRFT in the time–frequency plane, \(p\) denotes the transform order, and \(\mathcal{F}_p(\cdot)\) is the FRFT operator. It is evident that the FRFT is periodic with period 4. In particular, for \(p = 4n+1\) (i.e., \(\beta = 2n\pi + \pi/2\)), the FRFT reduces to the conventional Fourier transform (FT).
\end{remark}
\begin{definition}
The FRFT can also be defined as the $\alpha^{\text{th}}$ fractional power of the ordinary FT \cite{su2019analysis}, \cite{alikacsifouglu2024graph}:

\begin{equation}
\mathcal{F}^{(\alpha)}\psi_{\ell} 
= \left(e^{-j\ell\pi/2}\right)^{\alpha}\psi_{\ell} 
= e^{-j\alpha\ell\pi/2}\psi_{\ell},
\tag{11}
\label{11}
\end{equation}
where $\psi_{\ell}$, $\ell = 0,1,\ldots$, are the Hermite–Gaussian functions, which serve as the eigenfunctions of the FT with eigenvalues $e^{-j\ell\pi/2}$. The Hermite–Gaussian functions are given by:

\begin{equation}
\psi_{n}(u)
= \frac{2^{1/4}}{\sqrt{2^n n!}} H_n(\sqrt{2\pi}\,u) e^{-\pi u^2},
\tag{12}
\end{equation}
with $H_n$ being the $n^{\text{th}}$-order Hermite polynomial. This definition characterizes the FRFT through its eigenfunctions and eigenvalues, which provides a practical way to construct the FRFT kernel as a spectral expansion over the Hermite–Gaussian basis:

\begin{equation}
K_\alpha(u,u') = \sum_{\ell=0}^{\infty} \psi_\ell(u) \, e^{-j\alpha\ell\pi/2} \, \psi_\ell(u').
\tag{13}
\end{equation}

For discrete signals, the FRFT is implemented as the discrete FRFT (DFRFT) using a transformation matrix ${F}^\alpha$. Each element of the order-$\alpha$ DFRFT matrix is computed via the spectral expansion:

\begin{equation}
{F}^\alpha_{m,n} = \sum_{k=0}^{N-1} \phi_m^{(k)} \, e^{-j\alpha\pi k/2} \, \phi_n^{(k)},
\tag{14}
\end{equation}
where $\phi^{(k)}$ denotes the $k^{\text{th}}$ discrete Hermite–Gaussian vector and $\phi_n^{(k)}$ its $n^{\text{th}}$ entry. This approach provides a direct, implementable method to compute the FRFT for discrete signals in practice.
\end{definition}


\section{Manifold Fractional Harmonic Transform}
\label{sec:3}
\subsection{Definition and Properties}
\begin{definition}
The forward and inverse point cloud manifold harmonic transforms are defined as:
\begin{equation}
\hat{f} = H^T B f, \quad f = H \hat{f},
\tag{15}
\end{equation}
where \(H^T B H = I\).

Equivalently, in matrix form, the forward transform can be written as:
\begin{equation}
\hat{f} = F_M f,
\tag{16}
\end{equation}
where \(F_M := H^T B\) is the point cloud manifold harmonic matrix, and its inverse is \(F_M^{-1} = H\).

Perform the Jordan decomposition \cite{wang2017fractional} of \(F_M\):
\begin{equation}
F_M = P J P^{-1},
\tag{17}
\end{equation}
where \(J\) denotes the Jordan canonical form of \(F_M\). The point cloud manifold fractional harmonic matrix of order \(\alpha \in \mathbb{R}\) is then defined as:
\begin{equation}
F_M^{(\alpha)} := P J^\alpha P^{-1},
\tag{18}
\end{equation}
where \(J^{\alpha} = \mathrm{diag}(J_1^{\alpha},\ldots,J_N^{\alpha})\) is computed using the standard fractional power of Jordan blocks.

Thus, the point cloud manifold fractional harmonic transform of order \(\alpha \in \mathbb{R}\) is given by:
\begin{equation}
\hat{f}^{(\alpha)} = F_M^{(\alpha)} f, \quad 
f = (F_M^{(\alpha)})^{-1} \hat{f}^{(\alpha)} = F_M^{(-\alpha)} \hat{f}^{(\alpha)}.
\tag{19}
\label{eq:JORDAN_pmfht}
\end{equation}
\end{definition}
\begin{remark}
While any square matrix $F_M$ can theoretically be decomposed via the Jordan canonical form $F_M = P J P^{-1}$, this approach is often ill-conditioned in practical computations. Specifically, for point cloud data with non-uniform sampling, the Jordan blocks are extremely sensitive to small perturbations in the coordinates or the stiffness matrix $Q$, leading to significant numerical errors in the fractional power $F_M^{(\alpha)}$. 

\end{remark}
%
%
%

To address the problems of the previous definition, we present a new definition of PMFHT as follows:
\begin{definition}\label{definition:PMFHT}
To ensure the orthogonality of the transform and numerical stability, we define the normalized point cloud manifold harmonic matrix \(F_M\) as:
\begin{equation}
F_M := H^T B^{1/2},
\tag{20}
\end{equation}
where \(H\) is the matrix of generalized eigenvectors and \(B\) is the diagonal mass matrix. Since the eigenvectors satisfy the orthogonality condition \(H^T B H = I\), it is easily verified that \(F_M\) is an orthogonal matrix, i.e., \(F_M^{-1} = F_M^T = B^{1/2} H\).

Accordingly, the forward and inverse manifold harmonic transforms of a signal \(f\) can be expressed in terms of the weighted signal \(\tilde{f} = B^{1/2} f\) as:
\begin{equation}
\hat{f} = F_M (B^{1/2} f), \quad f = B^{-1/2} F_M^T \hat{f}.
\tag{21}
\end{equation}

Since \(F_M\) is an orthogonal matrix, it can be diagonalized via spectral decomposition:
\begin{equation}
F_M = V \Omega V^*,
\tag{22}
\end{equation}
where \(\Omega = \mathrm{diag}(\omega_1, \dots, \omega_N)\) contains the eigenvalues of \(F_M\) on the unit circle, and \(V\) is a unitary matrix. The point cloud manifold fractional harmonic matrix of order \(\alpha \in \mathbb{R}\) is then defined as:
\begin{equation}
F_M^{(\alpha)} := V \Omega^\alpha V^*,
\tag{23}
\end{equation}
where \(\Omega^\alpha = \mathrm{diag}(\omega_1^\alpha, \dots, \omega_N^\alpha)\). This definition bypasses the need for Jordan canonical forms, ensuring greater numerical robustness.

Thus, the point cloud manifold fractional harmonic transform (PMFHT) of order \(\alpha \in \mathbb{R}\) is given by:
\begin{equation}
\hat{f}^{(\alpha)} = F_M^{(\alpha)} (B^{1/2} f), \quad 
f = B^{-1/2} F_M^{(-\alpha)} \hat{f}^{(\alpha)}.
\tag{24}
\label{eq:PMFHT}
\end{equation}
\end{definition}
\begin{remark}
		The PMFHT performs an energy-normalized mapping of the spatial signal $f$ to yield a weighted representation:
		\begin{equation}
			\hat{f}^{(0)} =F_M^{(0)} (B^{1/2} f)=V \Omega^0 V^*(B^{1/2} f) = V V^*(B^{1/2} f) = B^{1/2} f,
			\tag{25}
		\end{equation}
		which ensures that the Euclidean $L_2$ norm in the $0^{th}$-order domain reflects the total manifold energy $\|\hat{f}^{(0)}\|_2^2 = f^T B f$.

		PMFHT Reduction to PMHT when $\alpha=1$:
		\begin{equation}
			\hat{f}^{(1)} =F_M^{(1)} (B^{1/2} f)=V \Omega^1 V^*(B^{1/2} f) = F_M(B^{1/2} f) = H^T B f,
			\tag{26}
		\end{equation}
		so the $1^{th}$-order PMFHT reduces to the standard PMHT.

\end{remark}
Definition \eqref{eq:PMFHT} is the formal definition of PMFHT, which addresses the shortcomings of the definition \eqref{eq:JORDAN_pmfht}. Fig.~\ref{fig:1} illustrates the implementation process of the PMFHT. 
\begin{figure*}[htb]
  \centering
  \includegraphics[width=1\linewidth]{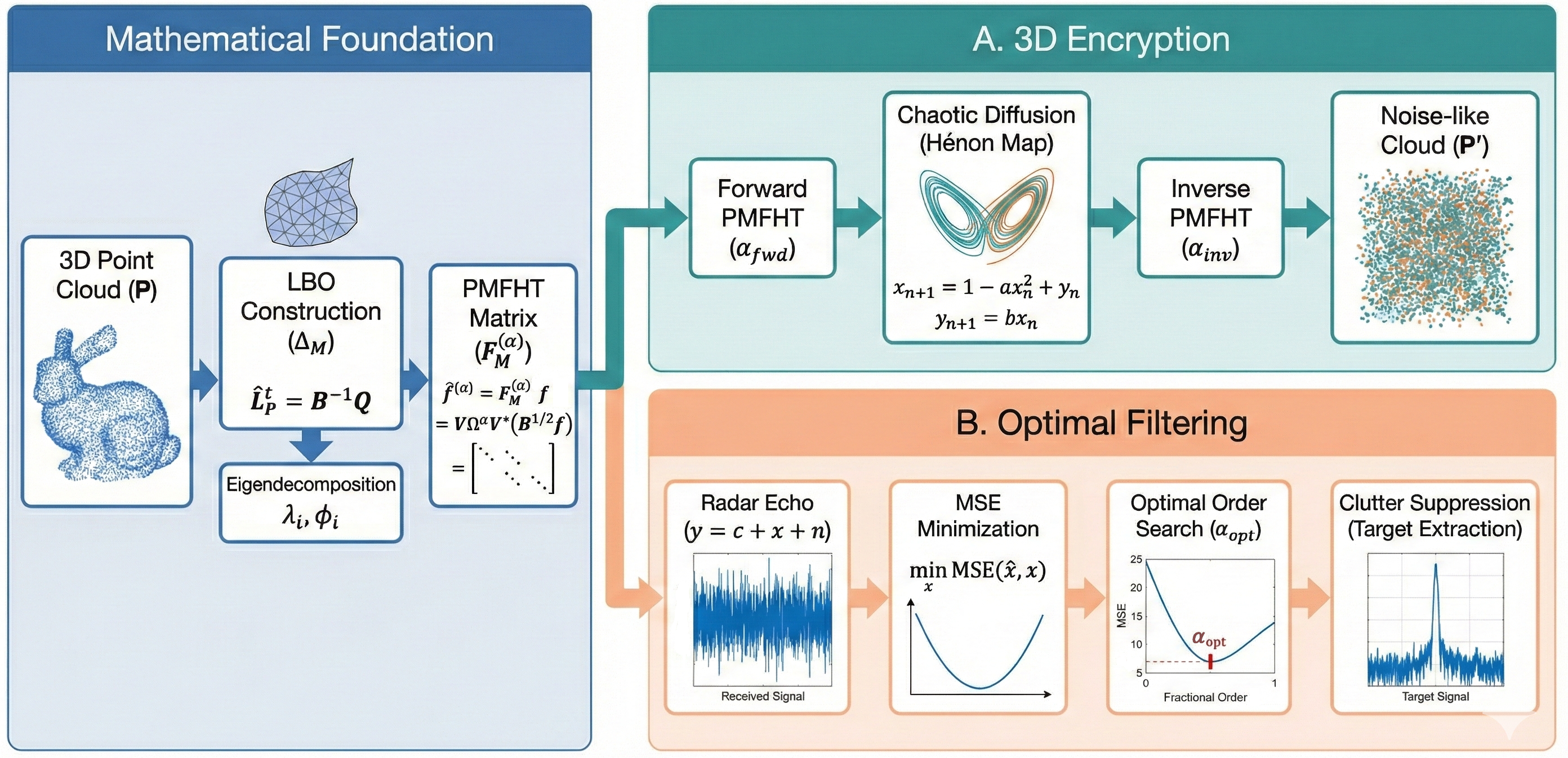}
  \caption{Schematic framework of the PMFHT: from mathematical foundation to representative applications.}
  \label{fig:1}
\end{figure*}
\begin{property}
	\text{Index additivity.} \\
	Fractional matrix powers preserve index additivity:
	\begin{equation}
		F_\alpha F_\beta
		= V \Omega_\alpha V^{-1} V \Omega_\beta V^{-1}
		= V \Omega_{\alpha+\beta} V^{-1}
		= F_{\alpha+\beta}.
		\tag{27}
	\end{equation}
\end{property}
\begin{property}
	\text{Unitarity.}\\
	For every $\alpha \in \mathbb{R}$, the PMFHT matrix $F_M^{(\alpha)}$ is unitary:
	\begin{equation}
		\bigl(F_M^{(\alpha)}\bigr)^{\!*} F_M^{(\alpha)} = I.
		\tag{28}
	\end{equation}
\end{property}

\begin{proof}
	Since $F_M = H^\top B^{1/2}$ is a real orthogonal matrix (from $H^\top BH = I$),
	its eigenvalues satisfy $|\omega_i| = 1$, so $|\omega_i^\alpha| = 1$ for all $\alpha$.
	Writing $F_M^{(\alpha)} = V\Omega^\alpha V^*$ with unitary $V$:
	\begin{equation}
		\bigl(F_M^{(\alpha)}\bigr)^{\!*} F_M^{(\alpha)}
		= V(\Omega^\alpha)^*\Omega^\alpha V^*
		= V\,I\,V^* = I.
		\tag{29}
	\end{equation}
\end{proof}

\begin{remark}
	A direct consequence is that the inverse PMFHT equals the conjugate transpose
	of the forward PMFHT:
	\begin{equation}
		F_M^{(-\alpha)} = \bigl(F_M^{(\alpha)}\bigr)^{\!*}.
		\tag{30}
	\end{equation}
\end{remark}
%
\begin{property}
	\text{Periodicity.}\\
	The PMFHT is periodic in the fractional order if and only if all eigenvalues
	$\{\omega_i\}$ of $F_M$ are roots of unity.
	Specifically, if there exists a positive integer $k$ such that
	$\omega_i^k = 1$ for all $i$, then
	\begin{equation}
		F_M^{(\alpha+k)} = F_M^{(\alpha)} \quad \forall\,\alpha\in\mathbb{R}.
		\tag{31}
	\end{equation}
\end{property}

\begin{proof}
	By index additivity and $\Omega^k = I$:
	\begin{equation}
		F_M^{(\alpha+k)}
		= F_M^{(\alpha)} F_M^{(k)}
		= V\Omega^\alpha V^* \cdot V\Omega^k V^*
		= V\Omega^\alpha V^*
		= F_M^{(\alpha)}.
		\tag{32}
	\end{equation}
\end{proof}

\begin{remark}
	For a general Riemannian manifold the eigenvalues of $F_M$ need not be roots
	of unity, so periodicity in $\alpha$ does not hold in general.
	On symmetric manifolds whose LBO spectrum yields a circulant $F_M$, for example, uniformly sampled points on a circle, the eigenvalues are $N$-th roots
	of unity and the period $k = 4$ matches the classical FRFT.
\end{remark}
\begin{theorem}[Parseval's Theorem]
	Let $f, g$ be signals on the point cloud and define the manifold inner product
	$\langle f,g\rangle_B := f^\top Bg$.
	For any $\alpha \in \mathbb{R}$,
	\begin{equation}
		\bigl\langle \hat{f}^{(\alpha)},\,\hat{g}^{(\alpha)}\bigr\rangle
		\;=\; \langle f,\,g\rangle_B,
		\tag{33}
	\end{equation}
	and in particular,
	\begin{equation}
		\bigl\|\hat{f}^{(\alpha)}\bigr\|_2^2
		\;=\; f^\top Bf.
		\tag{34}
		\label{eq:energy}
	\end{equation}
\end{theorem}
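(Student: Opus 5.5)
The plan is to reduce the statement to the Unitarity property just established, using the weighted signal $\tilde f = B^{1/2} f$ from Definition~\ref{definition:PMFHT}. Because $\hat f^{(\alpha)}$ is complex in general while $f,g$ are real, the inner product on the left-hand side has to be the Hermitian one, $\langle x,y\rangle = x^{*}y$, conjugate-linear in the first slot. I will say this explicitly at the start, since with the plain bilinear form $x^\top y$ the identity fails for non-integer $\alpha$.

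First, write $\hat f^{(\alpha)} = F_M^{(\alpha)} B^{1/2} f$ and $\hat g^{(\alpha)} = F_M^{(\alpha)} B^{1/2} g$, as in \eqref{eq:PMFHT}. Then
\begin{equation*}
\bigl\langle \hat f^{(\alpha)},\hat g^{(\alpha)}\bigr\rangle
= \bigl(F_M^{(\alpha)}B^{1/2}f\bigr)^{*}F_M^{(\alpha)}B^{1/2}g
= f^{*}\bigl(B^{1/2}\bigr)^{*}\bigl(F_M^{(\alpha)}\bigr)^{*}F_M^{(\alpha)}B^{1/2}g .
\end{equation*}
By the Unitarity property, $\bigl(F_M^{(\alpha)}\bigr)^{*}F_M^{(\alpha)} = I$.

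The remaining factors are handled as follows. $B$ is diagonal with positive Voronoi areas, so $B^{1/2}$ is real, diagonal and symmetric. Since $f$ is real, $f^{*}=f^\top$. The expression therefore collapses to $f^\top B^{1/2}B^{1/2}g = f^\top B g = \langle f,g\rangle_B$. Setting $g=f$ gives \eqref{eq:energy}, which also agrees with the $\alpha=0$ remark, $\|\hat f^{(0)}\|_2^2 = f^\top Bf$.

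The step that needs the most care is the unitarity of $F_M^{(\alpha)}$ for non-integer $\alpha$, which rests on two facts. The first is that $F_M = H^\top B^{1/2}$ is real orthogonal. This follows from $H^\top B H = I$: it gives $F_M F_M^\top = H^\top B H = I$, and for square matrices this is equivalent to $F_M^\top F_M = I$. Hence $F_M$ is normal and unitarily diagonalizable with eigenvalues of modulus one. The second is that $\omega_i^\alpha$ must be defined by a fixed branch, e.g.\ $\omega_i = e^{j\theta_i}$ with $\theta_i\in(-\pi,\pi]$ and $\omega_i^\alpha = e^{j\alpha\theta_i}$, so that $|\omega_i^\alpha|=1$. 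Both are already part of the paper's Unitarity property, so I will simply cite it; I expect no further obstacle beyond making the Hermitian convention explicit.
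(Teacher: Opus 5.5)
Your argument is correct and is the paper's proof: you write $\langle \hat f^{(\alpha)},\hat g^{(\alpha)}\rangle = (B^{1/2}f)^{*}(F_M^{(\alpha)})^{*}F_M^{(\alpha)}(B^{1/2}g)$, cancel the middle factor by unitarity, and use that $B^{1/2}f$ is real; the paper uses the same Hermitian convention without stating it, and you state it. One side remark is overstated: the bilinear form $x^\top y$ can fail only when $F_M^{(\alpha)}$ is non-real (for example when $-1$ is an eigenvalue of $F_M$ and $\alpha\notin\mathbb{Z}$), because otherwise, with the principal branch, $F_M^{(\alpha)}$ is real orthogonal and the two conventions coincide.
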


\begin{proof}
	Using $\hat{f}^{(\alpha)} = F_M^{(\alpha)}(B^{1/2}f)$,
	$\hat{g}^{(\alpha)} = F_M^{(\alpha)}(B^{1/2}g)$:
	\begin{align}
		\bigl\langle \hat{f}^{(\alpha)},\hat{g}^{(\alpha)}\bigr\rangle
		&= \bigl(B^{1/2}f\bigr)^\top
		\bigl(F_M^{(\alpha)}\bigr)^{\!*} F_M^{(\alpha)}
		\bigl(B^{1/2}g\bigr) \notag\\
		&= \bigl(B^{1/2}f\bigr)^\top\!\bigl(B^{1/2}g\bigr)
		= f^\top Bg
		= \langle f,g\rangle_B.
		\tag{35}
	\end{align}
	Setting $f = g$ yields~\eqref{eq:energy}.
\end{proof}
\begin{theorem}[Boundedness Theorem]
	\label{thm:bounded}
	Let $f \in \mathbb{R}^N$ be a signal on the point cloud. Then for every
	fractional order $\alpha \in \mathbb{R}$, the PMFHT satisfies
	\begin{equation}
		\bigl\|\hat{f}^{(\alpha)}\bigr\|_2 \;=\; \bigl\|B^{1/2}f\bigr\|_2
		\;\leq\; \bigl\|B^{1/2}\bigr\|_2\,\bigl\|f\bigr\|_2.
		\tag{36}
	\end{equation}
Thus, the PMFHT is a bounded linear operator. Furthermore, the equality $\|\hat{f}^{(\alpha)}\|_2 = \|B^{1/2}f\|_2 = \sqrt{f^\top B f} = \|f\|_B$ indicates that it acts as an isometry from the spatial domain to the fractional spectral domain.
\end{theorem}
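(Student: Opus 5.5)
The plan is to obtain Theorem~\ref{thm:bounded} directly from the Unitarity property and Parseval's Theorem, after which the inequality reduces to the definition of the induced matrix norm. First, I will write $\hat{f}^{(\alpha)} = F_M^{(\alpha)}(B^{1/2}f)$ as in Definition~\ref{definition:PMFHT}. Then I will compute
\begin{equation*}
\bigl\|\hat{f}^{(\alpha)}\bigr\|_2^2 = (B^{1/2}f)^{*}\bigl(F_M^{(\alpha)}\bigr)^{*}F_M^{(\alpha)}(B^{1/2}f) = \bigl\|B^{1/2}f\bigr\|_2^2,
\end{equation*}
using $\bigl(F_M^{(\alpha)}\bigr)^{*}F_M^{(\alpha)} = I$. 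Since $f$ is real and $B$ is a real diagonal matrix with nonnegative entries (the Voronoi areas), $B^{1/2}$ is real, and the conjugate transpose of $B^{1/2}f$ is simply its transpose. The right-hand side therefore equals $f^\top B f$, which is exactly the special case \eqref{eq:energy} of Parseval's Theorem. This gives the equality $\|\hat{f}^{(\alpha)}\|_2 = \|B^{1/2}f\|_2 = \sqrt{f^\top Bf} = \|f\|_B$.

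For the inequality, I will invoke the definition of the operator $2$-norm, $\|Ax\|_2 \le \|A\|_2\|x\|_2$, with $A = B^{1/2}$. Because $B^{1/2}$ is diagonal with nonnegative entries, $\|B^{1/2}\|_2 = \max_i \sqrt{b_{ii}}$, which is finite. Linearity of $f \mapsto \hat{f}^{(\alpha)}$ is immediate, since it is a product of two matrices. Together with this uniform bound, which does not depend on $\alpha$, linearity gives boundedness.

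The isometry claim needs care rather than computation. I will interpret it with respect to the weighted norm $\|\cdot\|_B$ on the spatial domain. This is a genuine norm provided every $b_{ii} > 0$, which holds because the Voronoi cell areas are positive, so I will state that assumption explicitly. The only mild obstacle is making sure that the conjugation in the complex inner product does not interfere. It does not, because the unitarity of $F_M^{(\alpha)}$ absorbs the complex factors and the remaining vector $B^{1/2}f$ is real.
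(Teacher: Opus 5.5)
Your proposal is correct and matches the paper's proof. The paper gets the equality from the unitarity of $F_M^{(\alpha)}$ and the inequality from the operator-norm bound $\|B^{1/2}f\|_2\le\|B^{1/2}\|_2\|f\|_2$; your added remarks (that $B^{1/2}f$ is real, and that $b_{ii}>0$ is needed for $\|\cdot\|_B$ to be a norm) just make explicit what the paper's one-line argument leaves implicit.
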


\begin{proof}
	This result follows directly from the unitarity of $F_M^{(\alpha)}$ and the sub-multiplicativity of the matrix 2-norm.
\end{proof}
Fig.~\ref{fig:2} is a visualization of the manifold harmonic bases of the horse point cloud data.
Figs.~\ref{fig:5} illustrate the PMFHT spectra of the horse point cloud data under different fractional orders (0.2, 0.4, 0.6, 0.8 and 1.0). Since the point cloud signal is represented in \(\mathbb{R}^{N \times 3}\), applying PMFHT produces three separate spectra corresponding to the \(x\), \(y\), and \(z\)-coordinate components. In addition, the figures also present the fused energy spectrum obtained by computing the \(\ell_2\)-norm across the three component spectra.

%
%
\begin{algorithm}[htbp]
	\caption{Normalized Orthogonal PMFHT}\label{alg:PMFHT_normalized_orthogonal}
	\begin{algorithmic}[1]
		\State \textbf{Input:} Point cloud signal $f$, fractional order $\alpha \in \mathbb{R}$
		\State \textbf{Output:} Fractional spectral coefficients $\hat{f}^{(\alpha)}$
		
		\State Calculate discrete LBO matrix $\hat{L}_{P}^{t} = B^{-1}Q$ via local Voronoi estimation
		\State Solve generalized eigenproblem $QH = -\lambda BH$ to obtain eigenvectors $H$
		\State Construct normalized manifold Fourier matrix: $F_M = H^T B^{1/2}$
		\State Perform spectral decomposition of orthogonal matrix $F_M$: $F_M = V \Omega V^*$ 
		\State Compute fractional power of eigenvalues: $\Omega^\alpha = \operatorname{diag}(\omega_1^\alpha, \omega_2^\alpha, \ldots, \omega_N^\alpha)$
		\State Construct manifold fractional harmonic matrix: $F_M^{(\alpha)} = V \Omega^\alpha V^*$
		\State Apply transform with weighted signal: $\hat{f}^{(\alpha)} = F_M^{(\alpha)} (B^{1/2} f)$
		
	\end{algorithmic}
\end{algorithm}

\begin{figure}[t]
	\centering
	\subfloat[(a)]{\includegraphics[width=0.18\columnwidth]{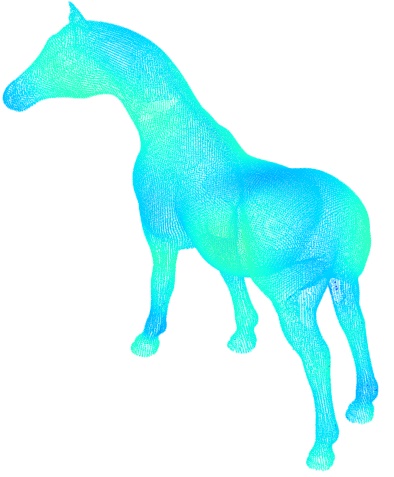}}
	\hfill
	\subfloat[(b)]{\includegraphics[width=0.18\columnwidth]{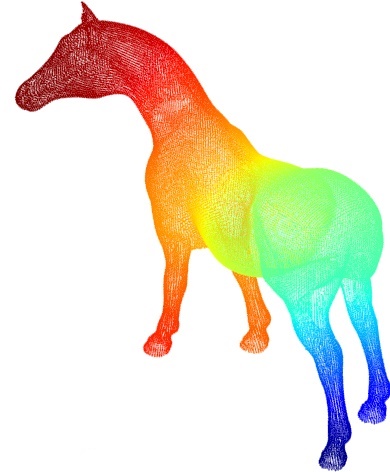}}
	\hfill
	\subfloat[(c)]{\includegraphics[width=0.18\columnwidth]{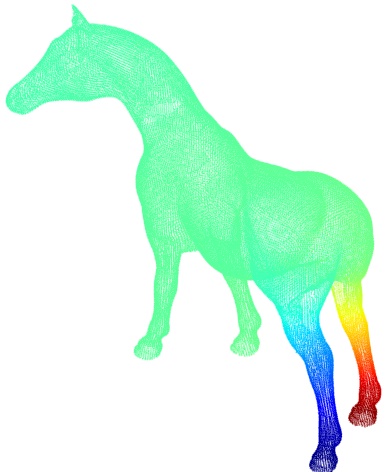}}
	\hfill
	\subfloat[(d)]{\includegraphics[width=0.18\columnwidth]{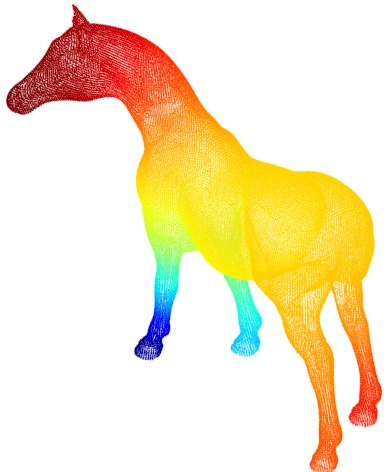}}
	\caption{Visualization of manifold harmonic bases for the horse point cloud data. (a) Manifold harmonic basis 1. (b) Manifold harmonic basis 2. (c) Manifold harmonic basis 3. (d) Manifold harmonic basis 4.}
	\label{fig:2}
\end{figure}

\begin{figure*}[htb]
  \centering
  \includegraphics[width=1\linewidth]{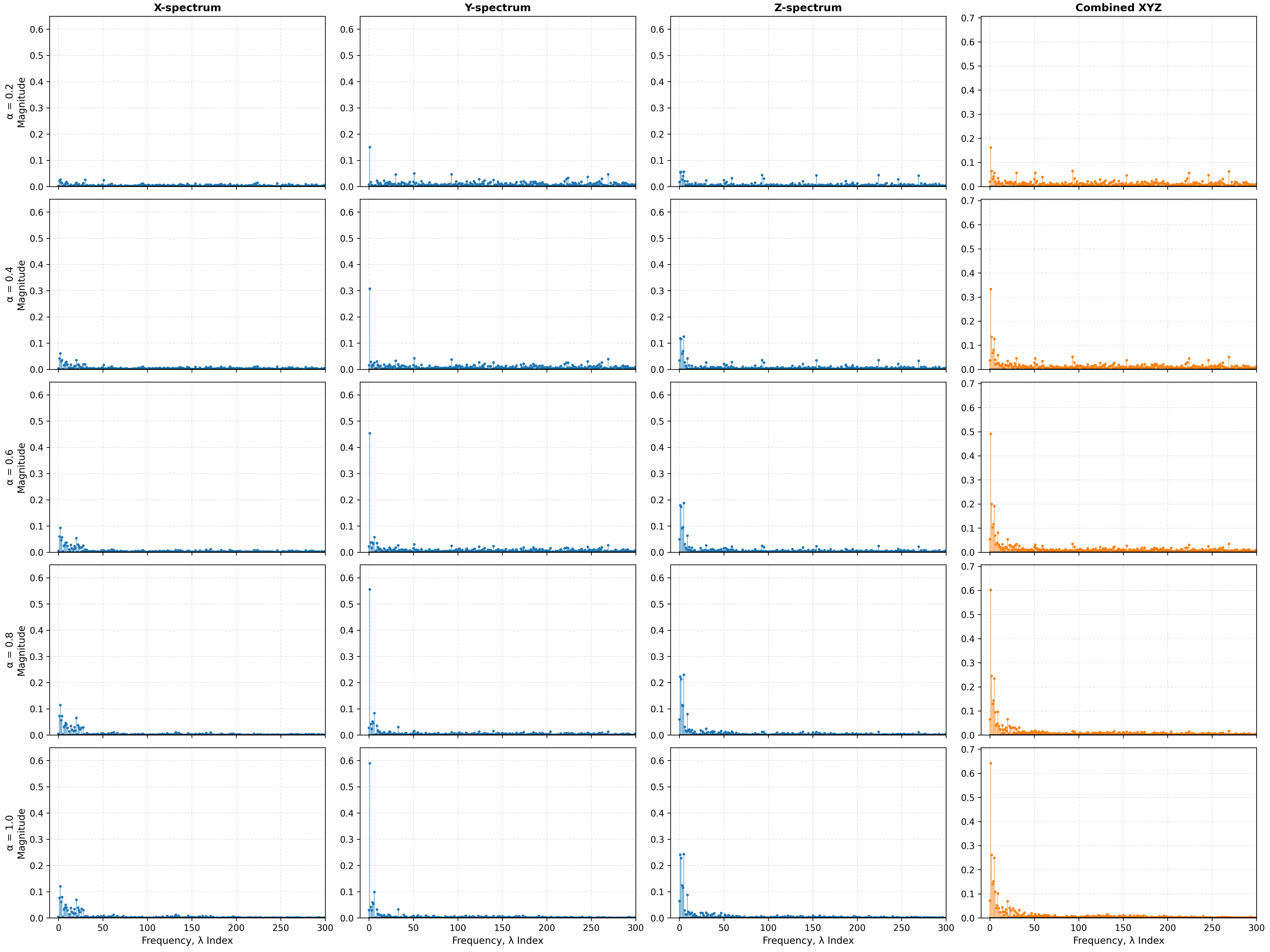}
  \caption{PMFHT spectra of horse point cloud ($\alpha = 0.2, 0.4, 0.6, 0.8, 1.0$).}
  \label{fig:5}
\end{figure*}
\subsection{Convolution and Correlation}
%
%
\begin{definition}[Generalized translation]
	The translation of a continuous signal $f(t)$ by a shift $i$ is fundamentally equivalent to convolving the signal with a Dirac delta function $\delta_i(t)$ centered at $i$:
	\begin{equation}
		(T_i f)(t) = (f * \delta_i)(t) = \int_{\mathbb{R}} \hat{f}(\xi) \hat{\delta}_i(\xi) e^{2\pi i \xi t} d\xi.
		\tag{37}
	\end{equation}
	Analogously, on a point cloud manifold, we define the $\alpha$-order generalized translation of a signal $f$ toward point $i$ as its manifold fractional convolution with the unit impulse $\delta_i \in \mathbb{R}^N$:
	\begin{equation}
		T_i^{(\alpha)} f \;:=\; f \circledast_\alpha \delta_i.
		\tag{38}
	\end{equation}
	The value of the translated signal at point $n$ is computed as the inverse transform of the spectral product over all fractional frequencies $\ell$:
	\begin{equation}
		(T_i^{(\alpha)} f)(n) = \sum_{\ell=0}^{N-1} \hat{f}^{(\alpha)}(\ell) \hat{\delta}_i^{(\alpha)}(\ell) \left[ B^{-1/2} F_M^{(-\alpha)} \right]_{n, \ell},
		\tag{39}
	\end{equation}
	where $\hat{\delta}_i^{(\alpha)}$ is the fractional spectrum of the impulse.\\
	According to the definition~\eqref{eq:PMFHT}:\\
	\begin{equation}
		\hat{\delta}_i^{(\alpha)} = F_M^{(\alpha)} (B^{1/2} \delta_i)=\sqrt{B_{ii}} F_{M, i}^{(\alpha)}, 
		\tag{40}
	\end{equation}
	where $F_{M, i}^{(\alpha)}$ denotes the $i^{th}$ column of $F_M^{(\alpha)}$. 
	
	Substituting this into the summation yields:
	\begin{equation}
		(T_i^{(\alpha)} f)(n) = \sum_{\ell=0}^{N-1} \left( \hat{f}^{(\alpha)}(\ell) \cdot \sqrt{B_{ii}} \left[ F_M^{(\alpha)} \right]_{\ell, i} \right) \left[ B^{-1/2} F_M^{(-\alpha)} \right]_{n, \ell}.
		\tag{41}
	\end{equation}
	
	Converting this element-wise summation over the frequency index $\ell$ back into compact matrix-vector notation, we obtain the global spatial representation of the translated signal:
	\begin{equation}
		T_i^{(\alpha)} f = B^{-1/2} F_M^{(-\alpha)} \bigl( \hat{f}^{(\alpha)} \odot \hat{\delta}_i^{(\alpha)} \bigr).
		\tag{42}
	\end{equation}
\end{definition}
\begin{definition}[Manifold Fractional Convolution]
	\label{def:conv}
	Let $f, g \in \mathbb{R}^N$ be signals on the point cloud manifold. Utilizing the generalized translation operator, we define the {$\alpha$-order manifold convolution} of $f$ and $g$ as the spatial linear combination of translated signals:
	\begin{equation}
		(f \circledast_\alpha g)
		\;:=\;
		\sum_{i=1}^N f(i) ({T}_i^{(\alpha)} g).
		\tag{43}
	\end{equation}
\end{definition}

\begin{theorem}[Convolution Theorem]
	\label{thm:conv}
	The PMFHT of the $\alpha$-order manifold convolution satisfies the spectral product formulation:
	\begin{equation}
		\widehat{(f \circledast_\alpha g)}^{(\alpha)}
		\;=\;
		\hat{f}^{(\alpha)} \odot \hat{g}^{(\alpha)},
		\tag{44}
	\end{equation}
	where $\odot$ denotes element-wise (Hadamard) multiplication.
\end{theorem}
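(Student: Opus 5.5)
The argument is a direct linearity computation built on the matrix form (42) of the generalized translation. By Definition~\ref{def:conv} and (42),
\begin{equation*}
f \circledast_\alpha g = \sum_{i=1}^N f(i)\, B^{-1/2} F_M^{(-\alpha)}\bigl(\hat{g}^{(\alpha)} \odot \hat{\delta}_i^{(\alpha)}\bigr)
= B^{-1/2} F_M^{(-\alpha)}\Bigl(\hat{g}^{(\alpha)} \odot \sum_{i=1}^N f(i)\,\hat{\delta}_i^{(\alpha)}\Bigr).
\end{equation*}
Here $T_i^{(\alpha)}$ is applied to $g$, so the spectrum appearing in (42) is $\hat g^{(\alpha)}$. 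Pulling the sum inside is legitimate because the maps $v \mapsto B^{-1/2}F_M^{(-\alpha)}v$ and $v \mapsto \hat{g}^{(\alpha)} \odot v$ are both linear.

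The key step is to recognize the inner sum as the fractional spectrum of $f$. Since the PMFHT in (24) is linear and $f = \sum_i f(i)\delta_i$, we get
\begin{equation*}
\sum_i f(i)\,\hat\delta_i^{(\alpha)} = F_M^{(\alpha)} B^{1/2} \sum_i f(i)\delta_i = \hat f^{(\alpha)}.
\end{equation*}
Equivalently, using (40), this equals $\sum_i f(i)\sqrt{B_{ii}}\,F^{(\alpha)}_{M,i} = F_M^{(\alpha)}B^{1/2}f$. Substituting back gives
\begin{equation*}
f\circledast_\alpha g = B^{-1/2}F_M^{(-\alpha)}\bigl(\hat f^{(\alpha)}\odot \hat g^{(\alpha)}\bigr).
\end{equation*}

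Next we apply the forward PMFHT to both sides:
\begin{equation*}
\widehat{(f\circledast_\alpha g)}^{(\alpha)} = F_M^{(\alpha)} B^{1/2} B^{-1/2} F_M^{(-\alpha)}\bigl(\hat f^{(\alpha)}\odot\hat g^{(\alpha)}\bigr).
\end{equation*}
The factors $B^{1/2}B^{-1/2}$ cancel because $B$ is diagonal with positive Voronoi areas. Index additivity (27) then gives $F_M^{(\alpha)}F_M^{(-\alpha)} = F_M^{(0)} = V\Omega^0V^* = I$, which is also consistent with the unitarity remark (30). What remains is $\hat f^{(\alpha)}\odot\hat g^{(\alpha)}$, which is (44).

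The only point needing care is bookkeeping in (42). We must ensure the translation of $g$ uses $\hat g^{(\alpha)}$ rather than $\hat f^{(\alpha)}$, and that the Hadamard product commutes with the weighted sum over $i$. Both follow immediately from linearity and the commutativity of $\odot$, so I expect no genuine obstacle. One side remark: $f\circledast_\alpha g$ may be complex-valued even for real $f,g$, since $F_M^{(\alpha)}$ is complex. This does not affect the argument, because every step is linear algebra over $\mathbb{C}$.
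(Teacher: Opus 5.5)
Your proof is correct and follows essentially the same linearity argument as the paper. You expand $T_i^{(\alpha)}g$ via (42), pull the weighted sum over $i$ through the Hadamard product and the linear transform maps, identify $\sum_i f(i)\hat{\delta}_i^{(\alpha)} = \sum_i \sqrt{B_{ii}}\, f(i)\, F^{(\alpha)}_{M,i}$ as $\hat{f}^{(\alpha)}$, and cancel $F_M^{(\alpha)}F_M^{(-\alpha)} = I$; the only difference from the paper is ordering (you collapse the sum in the spatial domain before applying $F_M^{(\alpha)}B^{1/2}$, whereas the paper applies the forward transform first and then factors out $\hat{g}^{(\alpha)}$), which is immaterial.
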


\begin{proof}
	Applying the forward PMFHT operator $F_M^{(\alpha)} B^{1/2}$ to Definition \ref{def:conv}, and exploiting the linearity of the transform, we obtain:
	\begin{align}
		\widehat{(f \circledast_\alpha g)}^{(\alpha)}
		&= F_M^{(\alpha)} B^{1/2} \left( \sum_{i=1}^N \sqrt{B_{ii}} f(i) \Bigl( B^{-1/2} F_M^{(-\alpha)} \bigl( \hat{g}^{(\alpha)} \odot F_{M, i}^{(\alpha)} \bigr) \Bigr) \right) \notag\\
		&= \sum_{i=1}^N \sqrt{B_{ii}} f(i) F_M^{(\alpha)} F_M^{(-\alpha)} \bigl( \hat{g}^{(\alpha)} \odot F_{M, i}^{(\alpha)} \bigr). 
		\notag\\
		&= \sum_{i=1}^N \sqrt{B_{ii}} f(i) \bigl( \hat{g}^{(\alpha)} \odot F_{M, i}^{(\alpha)} \bigr). 
		\tag{45}
	\end{align}
	Because the Hadamard product distributes over addition, we can factor out $\hat{g}^{(\alpha)}$:
	\begin{align}
		\widehat{(f \circledast_\alpha g)}^{(\alpha)}
		&= \hat{g}^{(\alpha)} \odot \left( \sum_{i=1}^N \sqrt{B_{ii}} f(i) F_{M, i}^{(\alpha)} \right). 
		\tag{46}
	\end{align}
	Observe the summation term. Since $F_{M, i}^{(\alpha)}$ is the $i^{th}$ column of $F_M^{(\alpha)}$, this summation is precisely the matrix-vector expansion of the forward PMFHT of $f$:
	\begin{equation}
		\sum_{i=1}^N F_{M, i}^{(\alpha)} \bigl( B^{1/2} f \bigr)_i \;=\; F_M^{(\alpha)} (B^{1/2} f) \;=\; \hat{f}^{(\alpha)}. 
		\tag{47}
	\end{equation}
	Substituting this back yields the final result:
	\begin{equation}
		\widehat{(f \circledast_\alpha g)}^{(\alpha)} \;=\; \hat{g}^{(\alpha)} \odot \hat{f}^{(\alpha)} \;=\; \hat{f}^{(\alpha)} \odot \hat{g}^{(\alpha)}. 
		\tag{48}
	\end{equation}
	This completes the proof.
\end{proof}
%

\begin{remark}
	The convolution $f \circledast_\alpha g$ is the unique signal whose
	$\alpha$-order PMFHT spectrum equals the pointwise product of the individual
	spectra.  For $\alpha = 1$ this reduces to the standard spectral-domain
	product on the manifold harmonics.  The operator is commutative
	($f \circledast_\alpha g = g \circledast_\alpha f$) because the Hadamard
	product is commutative.
\end{remark}
In classical Fourier analysis, translating a signal's spectrum by a frequency $\nu$ is achieved by modulating the spatial signal with a complex exponential harmonic:
\begin{equation}
	\hat{g}(\xi - \nu) = \mathcal{F} \{ g(t) e^{2\pi i \nu t} \}.
	\tag{49}
\end{equation}
On a point cloud manifold, the classical harmonic $e^{2\pi i \nu t}$ is generalized to the $k^{th}$ basis vector of the inverse PMFHT in the spatial domain. According to the inverse transform $f = B^{-1/2} F_M^{(-\alpha)} \hat{f}^{(\alpha)}$, the true $k^{th}$ spatial fractional harmonic basis is scaled by the mass matrix, given by $B^{-1/2} F_{M, k}^{(-\alpha)}$. 

To translate the spectrum of a signal $g$, we first modulate $g$ with this spatial basis, yielding a modulated signal $(B^{-1/2} F_{M, k}^{(-\alpha)}) \odot g$. We then apply the forward PMFHT to this modulated signal:
\begin{equation*}
	F_M^{(\alpha)} B^{1/2} \Bigl( (B^{-1/2} F_{M, k}^{(-\alpha)}) \odot g \Bigr) = F_M^{(\alpha)} \bigl( F_{M, k}^{(-\alpha)} \odot g \bigr).
	\tag{50}
\end{equation*}

\begin{definition}[Generalized Spectral Translation]
	\label{def:spectral_trans}
	We define the $\alpha$-order generalized spectral translation of the spectrum $\hat{g}^{(\alpha)}$ by frequency index $k$ as the forward PMFHT of the spatially modulated signal:
	\begin{equation}
		(\mathcal{S}_k^{(\alpha)} \hat{g}^{(\alpha)}) \;:=\; F_M^{(\alpha)} \bigl( F_{M, k}^{(-\alpha)} \odot g \bigr).
		\tag{51}
	\end{equation}
\end{definition}
\begin{definition}[Spectral Fractional Convolution]
	\label{def:dual_conv}
	Analogous to spatial convolution, the $\alpha$-order spectral convolution of $\hat{f}^{(\alpha)}$ and $\hat{g}^{(\alpha)}$ is defined as the spectral linear combination of translated spectra:
	\begin{equation}
		\bigl(\hat{f}^{(\alpha)} \circledast_\alpha^* \hat{g}^{(\alpha)}\bigr)
		\;:=\;
		\sum_{k=0}^{N-1} \hat{f}^{(\alpha)}(k) (\mathcal{S}_k^{(\alpha)} \hat{g}^{(\alpha)}).
		\tag{52}
	\end{equation}
\end{definition}

\begin{theorem}[Dual Convolution Theorem]
	\label{thm:dual_conv}
	The $\alpha$-order spectral convolution corresponds to the PMFHT of the element-wise spatial product of the signals:
	\begin{equation}
		\hat{f}^{(\alpha)} \circledast_\alpha^* \hat{g}^{(\alpha)}
		\;=\;
		\widehat{(f \odot g)}^{(\alpha)}.
		\tag{53}
	\end{equation}
\end{theorem}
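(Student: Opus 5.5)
The plan is to expand the left-hand side using Definition~\ref{def:dual_conv} and Definition~\ref{def:spectral_trans}. Then we pull everything that does not depend on the summation index $k$ outside the sum, so that the sum over $k$ becomes an inverse PMFHT. This mirrors the proof of Theorem~\ref{thm:conv}, where the sum over $i$ was recognised as a forward PMFHT.

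Concretely, we first write
\begin{equation*}
\hat{f}^{(\alpha)} \circledast_\alpha^* \hat{g}^{(\alpha)}
= \sum_{k=0}^{N-1} \hat{f}^{(\alpha)}(k)\, F_M^{(\alpha)} \bigl( F_{M,k}^{(-\alpha)} \odot g \bigr).
\end{equation*}
The operator $F_M^{(\alpha)}$ is linear. The Hadamard product with the fixed vector $g$ is also linear and acts entrywise, so it distributes over sums. Together these let us move $F_M^{(\alpha)}$ and $\odot g$ outside the summation:
\begin{equation*}
\hat{f}^{(\alpha)} \circledast_\alpha^* \hat{g}^{(\alpha)}
= F_M^{(\alpha)} \Bigl( \Bigl( \sum_{k=0}^{N-1} \hat{f}^{(\alpha)}(k)\, F_{M,k}^{(-\alpha)} \Bigr) \odot g \Bigr).
\end{equation*}

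The key step is to recognise the inner sum. Since $F_{M,k}^{(-\alpha)}$ is the $k^{th}$ column of $F_M^{(-\alpha)}$, the sum is the matrix--vector product $F_M^{(-\alpha)} \hat{f}^{(\alpha)}$. By the inverse formula in Definition~\ref{definition:PMFHT}, or equivalently by index additivity, this equals
\begin{equation*}
F_M^{(-\alpha)} F_M^{(\alpha)} B^{1/2} f = B^{1/2} f.
\end{equation*}
The expression therefore becomes $F_M^{(\alpha)} \bigl( (B^{1/2} f) \odot g \bigr)$.

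Finally, we use that $B$ is a diagonal mass matrix. Hence $B^{1/2}$ acts entrywise, and $(B^{1/2} f) \odot g = B^{1/2} (f \odot g)$. This gives
\begin{equation*}
F_M^{(\alpha)} B^{1/2}(f\odot g) = \widehat{(f \odot g)}^{(\alpha)},
\end{equation*}
which is the forward PMFHT of $f \odot g$ by Definition~\ref{definition:PMFHT}.

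The only step needing care is this last commutation of the diagonal weighting $B^{1/2}$ with the Hadamard product. It is exactly the reason the modulating basis in Definition~\ref{def:spectral_trans} was taken as $F_{M,k}^{(-\alpha)}$ rather than the mass-scaled $B^{-1/2} F_{M,k}^{(-\alpha)}$: one factor of $B^{1/2}$ is absorbed into the forward transform. I would state that identity explicitly, componentwise, to make the argument airtight. Everything else is linearity and the column expansion of a matrix--vector product.
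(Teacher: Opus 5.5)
Your proof is correct and is essentially the paper's argument run in the opposite direction. The paper starts from $f\odot g$, expands $f=\sum_{k}\hat{f}^{(\alpha)}(k)\,B^{-1/2}F_{M,k}^{(-\alpha)}$, pulls the diagonal $B^{-1/2}$ through the Hadamard product, and then applies $F_M^{(\alpha)}B^{1/2}$ to arrive at Definition~\ref{def:dual_conv}; you instead collapse the sum to $F_M^{(-\alpha)}\hat{f}^{(\alpha)}=B^{1/2}f$ and then commute $B^{1/2}$ with $\odot\, g$, which uses the same two ingredients (the column expansion of the inverse transform and the diagonality of $B^{\pm 1/2}$).
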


\begin{proof}
	By the inverse PMFHT, the signal $f = B^{-1/2} F_M^{(-\alpha)} \hat{f}^{(\alpha)} = \sum_{k=0}^{N-1} \hat{f}^{(\alpha)}(k) \bigl( B^{-1/2} F_{M, k}^{(-\alpha)} \bigr)$.
	Substituting this expansion into the spatial product $(f \odot g)$ and noting that $B^{-1/2}$ is a diagonal matrix:
	\begin{equation}
		(f \odot g) = \left( \sum_{k=0}^{N-1} \hat{f}^{(\alpha)}(k) B^{-1/2} F_{M, k}^{(-\alpha)} \right) \odot g = B^{-1/2} \sum_{k=0}^{N-1} \hat{f}^{(\alpha)}(k) \bigl( F_{M, k}^{(-\alpha)} \odot g \bigr).
		\tag{54}
	\end{equation}
	Applying the forward PMFHT operator $F_M^{(\alpha)} B^{1/2}$ to $(f \odot g)$ yields:
	\begin{align}
		\widehat{(f \odot g)}^{(\alpha)}
		&= F_M^{(\alpha)} B^{1/2} \left( B^{-1/2} \sum_{k=0}^{N-1} \hat{f}^{(\alpha)}(k) \bigl( F_{M, k}^{(-\alpha)} \odot g \bigr) \right) \notag\\
		&= \sum_{k=0}^{N-1} \hat{f}^{(\alpha)}(k) F_M^{(\alpha)} \bigl( F_{M, k}^{(-\alpha)} \odot g \bigr).
		\tag{55}
	\end{align}
	Recognizing the term inside the summation as the generalized spectral translation $(\mathcal{S}_k^{(\alpha)} \hat{g}^{(\alpha)})$, we obtain:
	\begin{equation}
		\widehat{(f \odot g)}^{(\alpha)} = \sum_{k=0}^{N-1} \hat{f}^{(\alpha)}(k) (\mathcal{S}_k^{(\alpha)} \hat{g}^{(\alpha)}) = \hat{f}^{(\alpha)} \circledast_\alpha^* \hat{g}^{(\alpha)}.
		\tag{56}
	\end{equation}
	This completes the proof.
\end{proof}

\begin{definition}[Manifold Fractional Cross-Correlation]
	\label{def:corr}
	The {$\alpha$-order manifold cross-correlation} of $f$ and $g$ is defined as the spatial linear combination of the conjugate-translated signal $g$, weighted by the conjugated signal $f$:
	\begin{equation}
		(f \star_\alpha g)
		\;:=\;
		\sum_{i=1}^N \overline{f(i)} \bigl({T}_{-i}^{(\alpha)} g\bigr),
		\tag{57}
	\end{equation}
	where $\overline{(\cdot)}$ denotes the complex conjugate.
\end{definition}

\begin{theorem}[Correlation Theorem]
	\label{thm:corr}
	The PMFHT of the $\alpha$-order manifold cross-correlation satisfies:
	\begin{equation}
		\widehat{(f \star_\alpha g)}^{(\alpha)}
		\;=\;
		\overline{\hat{f}^{(\alpha)}} \odot \hat{g}^{(\alpha)}.
		\tag{58}
	\end{equation}
\end{theorem}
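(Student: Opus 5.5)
The plan is to mirror the proof of Theorem~\ref{thm:conv} line by line. The one new ingredient is a precise meaning for the conjugate translation $T_{-i}^{(\alpha)}$. Definition~\ref{def:corr} only names this operator. In keeping with the classical identity $\widehat{\delta_{-i}}=\overline{\widehat{\delta_i}}$, I would fix its spectral action as
\begin{equation*}
T_{-i}^{(\alpha)} g \;:=\; B^{-1/2} F_M^{(-\alpha)}\bigl(\hat g^{(\alpha)} \odot \overline{\hat\delta_i^{(\alpha)}}\bigr),
\qquad
\overline{\hat\delta_i^{(\alpha)}} = \sqrt{B_{ii}}\,\overline{F_{M,i}^{(\alpha)}}.
\end{equation*}
This is the analogue of (42), with the impulse spectrum replaced by its conjugate. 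The second equality uses (40) and the fact that $B$ is a real, positive diagonal matrix, so $\sqrt{B_{ii}}$ is real. Everything afterwards is linear algebra.

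First, I would apply the forward operator $F_M^{(\alpha)}B^{1/2}$ to the sum in Definition~\ref{def:corr} and pull it inside by linearity. In each term the product $B^{1/2}B^{-1/2}$ cancels. The product $F_M^{(\alpha)}F_M^{(-\alpha)}$ equals $I$ by index additivity (27), or equivalently by the unitarity property and (30). This leaves
\begin{equation*}
\widehat{(f\star_\alpha g)}^{(\alpha)} = \sum_{i=1}^N \overline{f(i)}\,\sqrt{B_{ii}}\;\bigl(\hat g^{(\alpha)}\odot \overline{F_{M,i}^{(\alpha)}}\bigr).
\end{equation*}

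Second, I would factor $\hat g^{(\alpha)}$ out of the sum, since the Hadamard product distributes over addition, exactly as in (46). The remaining sum is $\sum_i \overline{f(i)\sqrt{B_{ii}}\,F_{M,i}^{(\alpha)}}$, because $\sqrt{B_{ii}}$ is real. This equals $\overline{F_M^{(\alpha)}(B^{1/2}f)} = \overline{\hat f^{(\alpha)}}$ by the column expansion (47). Combining the two steps gives (58).

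The main obstacle is the first point: justifying the conjugate form of the translation, since the paper gives no independent definition of $T_{-i}^{(\alpha)}$. Conjugation genuinely matters here, because $F_M^{(\alpha)}=V\Omega^\alpha V^*$ is complex for non-integer $\alpha$. I would justify the definition by the reduction $\alpha=1$. There $F_M$ is real, the conjugation is invisible, and the formula recovers the standard manifold-harmonic correlation. A careless reading that takes $T_{-i}^{(\alpha)}$ equal to $T_i^{(\alpha)}$ would give $\overline{f}$ transformed, i.e.\ $F_M^{(\alpha)}B^{1/2}\overline f$, rather than $\overline{\hat f^{(\alpha)}}$. So I would state the convention explicitly before running the computation.
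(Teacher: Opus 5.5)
Your proposal is correct and follows the paper's proof essentially step by step. You apply $F_M^{(\alpha)}B^{1/2}$, cancel $F_M^{(\alpha)}F_M^{(-\alpha)}$, factor out $\hat{g}^{(\alpha)}$ by distributivity of the Hadamard product, and recognize the remaining sum as $\overline{\hat{f}^{(\alpha)}}$, exactly as the paper does. The paper also uses your convention for $T_{-i}^{(\alpha)}$, with the impulse spectrum replaced by $\overline{\sqrt{B_{ii}}}\,\overline{F_{M,i}^{(\alpha)}}$, but only implicitly in the first line of its computation, so stating it explicitly fills an unstated step rather than departing from the paper's route.
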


\begin{proof}
	Applying the forward PMFHT operator $F_M^{(\alpha)} B^{1/2}$ to Definition \ref{def:corr}, and exploiting the linearity of the transform, we obtain:
	\begin{align}
		\widehat{(f \star_\alpha g)}^{(\alpha)}
		&= F_M^{(\alpha)} B^{1/2} \left( \sum_{i=1}^N \overline{f(i)} \Bigl( B^{-1/2} F_M^{(-\alpha)} \bigl( \hat{g}^{(\alpha)} \odot \overline{\sqrt{B_{ii}}}\overline{F_{M, i}^{(\alpha)}} \bigr) \Bigr) \right) \notag\\
		&= \sum_{i=1}^N \overline{f(i)} F_M^{(\alpha)} F_M^{(-\alpha)} \bigl( \hat{g}^{(\alpha)} \odot \overline{\sqrt{B_{ii}}}\overline{F_{M, i}^{(\alpha)}} \bigr).
		\notag\\
		&= \sum_{i=1}^N \overline{f(i)} \bigl( \hat{g}^{(\alpha)} \odot \overline{\sqrt{B_{ii}}}\overline{F_{M, i}^{(\alpha)}} \bigr).
		\tag{59}
	\end{align}
	Because the Hadamard product distributes over addition, we factor out $\hat{g}^{(\alpha)}$:
	\begin{equation}
		\widehat{(f \star_\alpha g)}^{(\alpha)}
		= \hat{g}^{(\alpha)} \odot \left( \sum_{i=1}^N \overline{f(i)} \overline{\sqrt{B_{ii}}} \overline{F_{M, i}^{(\alpha)}} \right).
		\tag{60}
	\end{equation}
	Observe the summation term. It is exactly the complex conjugate of the matrix-vector expansion for the forward PMFHT of $f$:
	\begin{equation}
		\sum_{i=1}^N \overline{f(i)} \overline{\sqrt{B_{ii}}} \overline{F_{M, i}^{(\alpha)}}
		\;=\; \overline{ \sum_{i=1}^N F_{M, i}^{(\alpha)} \bigl( B^{1/2} f \bigr)_i }
		\;=\; \overline{ \hat{f}^{(\alpha)} }.
		\tag{61}
	\end{equation}
	Substituting this back yields the final result:
	\begin{equation}
		\widehat{(f \star_\alpha g)}^{(\alpha)} \;=\; \hat{g}^{(\alpha)} \odot \overline{\hat{f}^{(\alpha)}} \;=\; \overline{\hat{f}^{(\alpha)}} \odot \hat{g}^{(\alpha)}.
		\tag{62}
	\end{equation}
	This completes the proof.
\end{proof}
\subsection{Sampling and Interpolation}

Consider a point cloud signal 
$f \in \mathbb{R}^{N \times 3}$, where each row corresponds to a 3D point (or a 3-channel feature). Sampling selects $K$ points ($K < N$) to form a sampled signal $f_{\mathcal{K}} \in \mathbb{R}^{K \times 3}$, where 
$\mathcal{K} = \{\mathcal{K}_0, \ldots, \mathcal{K}_{K-1}\}$ denotes the sampled point indices. The sampling operator $\Psi \in \mathbb{R}^{K \times N}$ is defined as
\begin{equation}
	\Psi_{i,j}=
	\begin{cases}
		1, & j = \mathcal{K}_i,\\
		0, & \text{otherwise},
	\end{cases}
	\tag{63}
\end{equation}
such that
\begin{equation}
	\text{sampling: } \quad f_{\mathcal{K}} = \Psi f \in \mathbb{R}^{K \times 3}.\tag{64}
\end{equation}

The interpolation operator $\Phi \in \mathbb{R}^{N \times K}$ maps the sampled signal back to the full point set:
\begin{equation}
	\text{interpolation: } \quad f' = \Phi f_{\mathcal{K}} = \Phi \Psi f \in \mathbb{R}^{N \times 3},
	\tag{65}
\end{equation}
where $f'$ is either an approximation or an exact recovery of $f$.


\begin{definition}[$\alpha$-Bandlimited Signal]
	A point cloud signal $f$ is said to be $\alpha$-bandlimited if there exists an integer $K_b \in \{1, \ldots, N\}$ such that its PMFHT spectrum satisfies
	\begin{equation}
		\hat{f}^{(\alpha)}(k) = \mathbf{0} \quad \text{for all } k \ge K_b,
		\tag{66}
	\end{equation}
	where $\hat{f}^{(\alpha)} = F_M^{(\alpha)} B^{1/2} f$.  
	The smallest such $K_b$ is referred to as the $\alpha$-bandwidth of $f$. We denote the set of all such $\alpha$-bandlimited signals by $\operatorname{BL}_{K_b}(F_M^{(\alpha)})$.
\end{definition} 

To facilitate the derivation, we define the \emph{bandlimited manifold basis matrix} $\mathcal{V}_{K_b}^{(\alpha)} \in \mathbb{R}^{N \times K_b}$ as:
\begin{equation}
	\mathcal{V}_{K_b}^{(\alpha)} \;:=\; B^{-1/2} (F_M^{(-\alpha)})_{(K_b)},
	\tag{67}
\end{equation}
where $(F_M^{(-\alpha)})_{(K_b)}$ denotes the submatrix containing the first $K_b$ columns of $F_M^{(-\alpha)}$. By the inverse PMFHT, any $f \in \operatorname{BL}_{K_b}(F_M^{(\alpha)})$ can be exactly synthesized as $f = \mathcal{V}_{K_b}^{(\alpha)} \hat{f}^{(\alpha)}_{(K_b)}$.

\begin{theorem}[Sampling Theorem for PMFHT] 
	\label{thm:sampling}
	For any $\alpha$-bandlimited point cloud signal $f \in \operatorname{BL}_{K_b}(F_M^{(\alpha)})$, perfect reconstruction from $K \ge K_b$ sampled points is achievable if and only if the sampling operator $\Psi$ satisfies the full column rank condition:
	\begin{equation}
		\operatorname{rank}\!\left(\Psi \mathcal{V}_{K_b}^{(\alpha)}\right) = K_b.
		\tag{68}
	\end{equation}
	In this case, perfect recovery $f = \Phi \Psi f$ holds by choosing the interpolation operator
	\begin{equation}
		\Phi = \mathcal{V}_{K_b}^{(\alpha)} U,
		\tag{69}
	\end{equation}
	where the matrix $U \in \mathbb{R}^{K_b \times K}$ is a left inverse satisfying $U \Psi \mathcal{V}_{K_b}^{(\alpha)} = I_{K_b \times K_b}$.
\end{theorem}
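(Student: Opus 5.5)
The plan is to reduce everything to the bandlimited synthesis identity $f = \mathcal{V}_{K_b}^{(\alpha)} \hat{f}^{(\alpha)}_{(K_b)}$ noted just before the theorem. This identity follows from the inverse PMFHT in Definition~\ref{definition:PMFHT} together with the vanishing of $\hat f^{(\alpha)}(k)$ for $k\ge K_b$. First I would record that $\mathcal{V}_{K_b}^{(\alpha)}$ itself has full column rank $K_b$. Indeed, $F_M^{(-\alpha)}$ is unitary by the Unitarity property, so its first $K_b$ columns are linearly independent, and $B^{-1/2}$ is an invertible diagonal matrix. Hence the map $\hat f^{(\alpha)}_{(K_b)}\mapsto f$ is a bijection between $\mathbb{C}^{K_b}$ and $\operatorname{BL}_{K_b}(F_M^{(\alpha)})$. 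The samples are then
\begin{equation*}
f_{\mathcal K}=\Psi f = \bigl(\Psi\mathcal{V}_{K_b}^{(\alpha)}\bigr)\hat f^{(\alpha)}_{(K_b)},
\end{equation*}
so reconstruction of $f$ from $f_{\mathcal K}$ is equivalent to recovering the coefficient vector $\hat f^{(\alpha)}_{(K_b)}$ from this linear system. For the three-channel case $f\in\mathbb{R}^{N\times 3}$, I will argue column by column.

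\emph{Sufficiency.} Assume $\operatorname{rank}(\Psi\mathcal{V}_{K_b}^{(\alpha)})=K_b$. Since this is a $K\times K_b$ matrix with $K\ge K_b$, it has a left inverse $U$, for instance the Moore--Penrose choice $U=\bigl((\Psi\mathcal{V})^*\Psi\mathcal{V}\bigr)^{-1}(\Psi\mathcal{V})^*$. Then
\begin{equation*}
\Phi\Psi f=\mathcal{V}_{K_b}^{(\alpha)}U\Psi\mathcal{V}_{K_b}^{(\alpha)}\hat f^{(\alpha)}_{(K_b)}=\mathcal{V}_{K_b}^{(\alpha)}\hat f^{(\alpha)}_{(K_b)}=f.
\end{equation*}
This gives the stated interpolation formula.

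\emph{Necessity.} Suppose the rank is less than $K_b$. Then there is a nonzero $c\in\mathbb{C}^{K_b}$ with $\Psi\mathcal{V}_{K_b}^{(\alpha)}c=0$. Set $g=\mathcal{V}_{K_b}^{(\alpha)}c$. This $g$ is nonzero by the full column rank of $\mathcal{V}_{K_b}^{(\alpha)}$, it is $\alpha$-bandlimited, and $\Psi g=0$. Consequently $f$ and $f+g$ have identical samples, so no operator $\Phi$ can recover both, and perfect reconstruction fails for the class $\operatorname{BL}_{K_b}(F_M^{(\alpha)})$.

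The main obstacle is a mismatch between real and complex scalars. Because $F_M^{(-\alpha)}$ is complex in general, $\mathcal{V}_{K_b}^{(\alpha)}$ is complex, yet the theorem writes $U\in\mathbb{R}^{K_b\times K}$ and takes $f$ real. I would handle this by working over $\mathbb{C}$ throughout, which also means allowing a complex $U$. For the necessity direction with real signals, I would take the real or imaginary part of $g$, choosing whichever is nonzero. This requires care, since $\operatorname{Re} g$ need not remain bandlimited unless the bandlimited space is closed under conjugation. If it is not, I would state necessity for the complex bandlimited class instead, which is the natural reading of the theorem.
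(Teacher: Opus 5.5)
Your proposal is correct and follows the paper's own route. Like the paper, you write $f=\mathcal{V}_{K_b}^{(\alpha)}\hat f^{(\alpha)}_{(K_b)}$, sample to get $f_{\mathcal K}=\Psi\mathcal{V}_{K_b}^{(\alpha)}\hat f^{(\alpha)}_{(K_b)}$, and take $\Phi=\mathcal{V}_{K_b}^{(\alpha)}U$ with $U$ a left inverse; your kernel-vector argument for necessity and your check that $\mathcal{V}_{K_b}^{(\alpha)}$ has full column rank just make explicit the paper's one-line claim that recovery is possible exactly when this system is uniquely solvable. Your remark about scalars is also well taken: $F_M^{(\alpha)}$ is complex in general, so the paper's real $U$ and real spectrum are loose, and the ``only if'' direction is cleanest for the complex bandlimited class, as you propose.
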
 

\begin{proof}  
	Since $f \in \operatorname{BL}_{K_b}(F_M^{(\alpha)})$, its spatial representation is purely governed by the bandlimited basis: $f = \mathcal{V}_{K_b}^{(\alpha)} \hat{f}^{(\alpha)}_{(K_b)}$, where $\hat{f}^{(\alpha)}_{(K_b)} \in \mathbb{R}^{K_b \times 3}$ contains the nonzero spectral coefficients. Applying the sampling operator yields:
	\begin{equation}
		f_{\mathcal{K}} = \Psi f = \Psi \mathcal{V}_{K_b}^{(\alpha)} \hat{f}^{(\alpha)}_{(K_b)}.\tag{70}
	\end{equation}
	Perfect recovery requires uniquely solving this linear system for the true spectrum $\hat{f}^{(\alpha)}_{(K_b)}$. This is algebraically possible if and only if the $K \times K_b$ matrix $\Psi \mathcal{V}_{K_b}^{(\alpha)}$ has full column rank $K_b$. Under this condition, a left inverse $U$ exists such that $U \Psi \mathcal{V}_{K_b}^{(\alpha)} = I_{K_b \times K_b}$, giving:
	\begin{equation}
		\hat{f}^{(\alpha)}_{(K_b)} = U f_{\mathcal{K}}.\tag{71}
	\end{equation}
	Substituting this back into the synthesis equation recovers the full signal:
	\begin{equation}
		f = \mathcal{V}_{K_b}^{(\alpha)} U f_{\mathcal{K}} = \Phi f_{\mathcal{K}},\tag{72}
	\end{equation}
	which proves that $f = \Phi \Psi f$. 
\end{proof} 

The previous section demonstrates that a qualified sampling operator, obtained by selecting $K$ points such that the corresponding rows in $\mathcal{V}_{K_b}^{(\alpha)}$ are linearly independent, guarantees perfect recovery for bandlimited signals. However, the choice of such points is not unique. In this section, we design an optimal sampling strategy to achieve maximum robustness against noise.

Consider additive noise $e \in \mathbb{R}^{K \times 3}$ introduced during the sampling process on the point cloud:
\begin{equation}
	f_{\mathcal{K}} = \Psi f + e.\tag{73}
\end{equation}
Using a qualified interpolation operator $\Phi$, the recovered signal $f^{\prime}_{e}$ becomes:
\begin{equation}
	f^{\prime}_{e} = \Phi f_{\mathcal{K}} = \Phi(\Psi f + e) = f + \Phi e.\tag{74}
\end{equation}
The reconstruction error is strictly bounded by the spectral norm of the operators:
\begin{align}
	\|f^{\prime}_{e} - f\|_{2} &= \|\Phi e\|_{2} = \|\mathcal{V}_{K_b}^{(\alpha)} U e\|_{2} \notag \\
	&\leq \|\mathcal{V}_{K_b}^{(\alpha)}\|_{2} \|U\|_{2} \|e\|_{2}.\tag{75}
\end{align}
Since the manifold geometry $\|\mathcal{V}_{K_b}^{(\alpha)}\|_{2}$ and the noise level $\|e\|_{2}$ are intrinsically fixed, minimizing the upper bound of the reconstruction error is equivalent to minimizing the spectral norm $\|U\|_{2}$. Because $U$ is the Moore-Penrose pseudo-inverse of $\Psi \mathcal{V}_{K_b}^{(\alpha)}$, minimizing $\|U\|_{2}$ is mathematically equivalent to maximizing the smallest singular value $\sigma_{\min}$ of $\Psi \mathcal{V}_{K_b}^{(\alpha)}$. Thus, the optimal sampling strategy is formulated as:
\begin{equation}
	\Psi^{\mathrm{opt}} = \arg \max_{\Psi} \sigma_{\min}\left(\Psi \mathcal{V}_{K_b}^{(\alpha)}\right). \tag{76}
\end{equation}
\section{Applications}
\label{sec:4}
\subsection{3D Point Cloud Encryption in the PMFHT Domain}
The proposed encryption scheme leverages the unique properties of the PMFHT and chaotic dynamics to achieve high-security 3D data protection. The core idea is to project the spatial coordinates of the point cloud into a fractional manifold spectral domain, where the geometric features are scrambled using chaotic phase masks before being reconstructed back into a noise-like spatial distribution.

The encryption process is designed as a multi-stage pipeline consisting of three primary phases: multi-order forward transformation, chaotic spectral diffusion, and multi-order inverse transformation. Let the input point cloud be denoted as $P = \{\mathbf{p}_i\}_{i=1}^N$, where each point $\mathbf{p}_i = [x_i, y_i, z_i]^T \in \mathbb{R}^3$. The three spatial dimensions are treated as independent signals defined over the manifold surface ${M}$.

Unlike traditional global transformations, we utilize independent fractional orders for each dimension to expand the key space. Let $\alpha_{fwd} = [p_{x,1}, p_{y,1}, p_{z,1}]$ be the set of forward fractional orders. First, the manifold harmonic basis ${H}$ and the mass matrix ${B}$ are computed via the discrete LBO. The unitary-like transformation operator ${F}_M$ is defined as
${F}_M = {H}^T {B}^{1/2}$.
By performing the eigen-decomposition ${F}_M = {V} {\Omega} {V}^H$, the fractional manifold spectral coefficients $\hat{\mathbf{S}}_{dim}$ for each dimension $dim \in \{x, y, z\}$ are obtained:
\begin{equation}
    \hat{{S}}_{dim} = {V} {\Omega}^{p_{dim,1}} {V}^H ({B}^{1/2} {f}_{dim})
    \tag{77}
\end{equation}
where ${f}_{dim}$ represents the vector of coordinates for the corresponding dimension.

To achieve high sensitivity and non-linearity, a chaotic system is employed to diffuse the spectral coefficients. We adopt the Hénon map as the pseudo-random generator, defined by the following iterative equations:
\begin{equation}
    \begin{cases}
    u_{n+1} = 1 - a u_n^2 + v_n \\
    v_{n+1} = b u_n
    \end{cases}
    \tag{78}
\end{equation}
where $(a, b)$ are system parameters and $(u_0, v_0)$ serve as initial keys. The generated sequence is normalized and mapped to a phase mask $\Phi \in \mathbb{C}^N$ such that $\Phi_n = \exp(j 2\pi \psi_n)$. The scrambled spectral coefficients $\hat{{S}}'_{dim}$ are calculated via element-wise complex multiplication:
\begin{equation}
    \hat{{S}}'_{dim} = \hat{{S}}_{dim} \odot \Phi_{dim}
    \tag{79}
\end{equation}
This operation randomizes the phase distribution of the manifold harmonics while preserving the spectral energy, effectively masking the underlying geometry.

The final encrypted point cloud $P'$ is reconstructed by applying the inverse PMFHT with a second set of independent orders $\alpha_{inv} = [p_{x,2}, p_{y,2}, p_{z,2}]$. This "double-lock" mechanism ensures that even if one set of orders is compromised, the data remains protected. The encrypted coordinates are given by:
\begin{equation}
    {f}'_{dim} = {B}^{-1/2} \left( {V} {\Omega}^{-p_{dim,2}} {V}^H \hat{{S}}'_{dim} \right)
    \tag{80}
\end{equation}
The resulting $P' = [{f}'_x, {f}'_y, {f}'_z]$ exhibits a noise-like spatial distribution with no discernible geometric features.


\begin{algorithm}[htbp]
	
	\caption{Multi-order PMFHT and Chaotic Encryption}
	\label{alg:encryption_compact}
	\begin{algorithmic}[1]
		\State \textbf{Input:} Point cloud $P \in \mathbb{R}^{N \times 3}$, keys $K = \{\alpha_{fwd}, \alpha_{inv}, u_0, v_0\}$
		\State \textbf{Output:} Encrypted point cloud $P'$
		
		\State Compute LBO basis ${H}$ and mass matrix ${B}$ 
		\State Construct transformation operator: ${F}_M = {H}^T {B}^{1/2}$
		\State Spectral decomposition: ${F}_M = {V} {\Omega} {V}^H$
		\State {For each} dimension $d \in \{x, y, z\}$:
		\State \hspace{0.5cm} a) Forward fractional transform:
		$\hat{{S}}_d = {V} {\Omega}^{p_{d,1}} {V}^H ({B}^{1/2} {f}_d)$
		\State \hspace{0.5cm} b) Generate chaotic phase mask $\Phi_d$ using Hénon map$(u_0, v_0)$
		\State \hspace{0.5cm} c) Spectral chaotic diffusion:
		$\hat{{S}}'_d = \hat{{S}}_d \odot \exp(j 2\pi \Phi_d)$
		\State \hspace{0.5cm} d) Inverse fractional transform (Secondary order):
		${f}'_d = {B}^{-1/2} \left( {V} {\Omega}^{-p_{d,2}} {V}^H \hat{{S}}'_d \right)$
		\State Combine dimensions: $P' = [{f}'_x, {f}'_y, {f}'_z]$
		
	\end{algorithmic}
\end{algorithm}
\subsection{Optimal 3D Point Cloud Filtering in the PMFHT Domain}
\label{sec:5}
Let a point cloud manifold consist of $N$ sample points. Denote by $x\in\mathbb{C}^N$ the vector of values of a scalar field on the point cloud. The observed signal model is shown below:
\begin{equation}
y = c + x+n,\tag{81}
\end{equation}
where $c$ is the clutter, $x$ is the target signal and $n$ is additive noise. According to the idea of optimal Wiener filtering, we consider designing a linear estimation operator to obtain the filtered output signal:
\begin{equation}
\hat x = \mathcal{G} y.\tag{82}
\end{equation}

According to the minimum MSE criterion, the optimal filtering problem on the point cloud manifold can be formulated as:
\begin{align}
	\min_{\mathcal{G}} \mathbb{E}\{ \|\hat x - {x} \|_{{B}}^2 \} &= \mathbb{E}\bigl\{ (\mathcal{G}{y} - {x})^H {B} (\mathcal{G}{y} - {x}) \bigr\} \notag\\
	&= \mathbb{E}\bigl\{ {y}^H \mathcal{G}^H {B} \mathcal{G} {y} - {y}^H \mathcal{G}^H {B} {x} - {x}^H {B} \mathcal{G} {y} + {x}^H {B} {x} \bigr\} \notag\\
	&= \operatorname{Tr}\bigl( \mathcal{G}^H {B} \, \mathcal{G} \, \mathbb{E}\{{y}{y}^H\} \bigr)
	- \operatorname{Tr}\bigl( \mathcal{G}^H {B} \, \mathbb{E}\{{x}{y}^H\} \bigr)
	- \operatorname{Tr}\bigl( \mathbb{E}\{{y}{x}^H\} {B} \mathcal{G} \bigr)
	+ \mathbb{E}\{{x}^H {B} {x}\} \notag\\
	&= \operatorname{Tr}\bigl( \mathcal{G}^H {B} \mathcal{G} {R}_{yy} \bigr)
	- \operatorname{Tr}\bigl( \mathcal{G}^H {B} {R}_{xy} \bigr)
	- \operatorname{Tr}\bigl({R}_{xy}^H {B} \mathcal{G} \bigr)
	+ \text{const}. \tag{83}
	\label{eq:61}
\end{align}
Taking the derivative of the objective with respect to \(\mathcal{G}^H\) and setting it to zero yields \({B}\mathcal{G}{R}_{yy} = {B}{R}_{xy}\). If \({B}\) is invertible, we obtain \(\mathcal{G}{R}_{yy} = {R}_{xy}\), so the optimal linear estimator is
\begin{equation}
\arg \min_{\mathcal{G}} \mathbb{E}\{\|\mathcal{G}{y} - {x}\|_B^2\} = \mathcal{G}_{\text{opt}} = {R}_{xy} {R}_{yy}^{-1} = \mathbb{E}\{{x}{y}^H\}\bigl(\mathbb{E}\{{y}{y}^H\}\bigr)^{-1}. \tag{84}
\end{equation}
We consider estimators of the multiplicative spectral form in the $\alpha$-order fractional domain:
\begin{equation}
\hat x = B^{-1/2}F_M^{- (\alpha)}\, {\Gamma} \, F_M^{(\alpha)}B^{1/2}\, y.\tag{85}
\label{59}
\end{equation}
where ${\Gamma} \in \mathbb{C}^{N \times N}$ representing the multiplicative filter in the fractional manifold harmonic domain.

Based on the optimal fractional Fourier domain filtering framework proposed in \cite{kutay1997optimal}, \cite{9435933}
the optimal filtering theory on the fractional domain of the point cloud manifold is established. 
The processing procedure of this method can be summarized as follows: first, the input point cloud signal is transformed into the manifold fractional harmonic spectral domain; then, a multiplicative filtering operation with specific filtering characteristics is performed in this domain. 
Finally, the processed signal is reconstructed back into the original spatial domain through the inverse fractional harmonic transform. Since the point cloud is discrete, the above multiplicative filtering operation can be implemented by multiplying with a diagonal matrix, which constrains the system matrix ${\Gamma}$ in \eqref{59} to a diagonal form. 
\begin{definition}
To this end, we define the research objective as solving the following optimization problem, 
where the matrix set $D$ is given by
$D \triangleq \{ {\Gamma} \mid {\Gamma} \in \mathbb{C}^{N \times N}, \; H \text{ is a diagonal matrix} \}$.
\end{definition}
Among all possible fractional orders $\alpha$, find the filter matrix H that minimizes the mean square error. The MSE cost is:
\begin{equation}
J(H; \alpha) = \mathbb{E}\{\|B^{-1/2}F_M^{- (\alpha)} {\Gamma} F_M^{(\alpha)} B^{1/2}y - x\|_2^2\},
\tag{86}
\end{equation}
with the optimization problem:
\begin{equation}
\min_{{\Gamma}\ \mathrm{ }}\; J(H; \alpha).\tag{87}
\label{eq:65}
\end{equation}
When $\alpha=1$, this problem reduces to diagonal filtering in the ordinary manifold harmonic domain, which is equivalent to linear shift-invariant (LSI) filtering on the manifold.
Parameter of interest is simply a vector in \( \mathbb{C}^N \) as the number of nonzero elements of \( {\Gamma} \) is at most \( N \). To that end, let \( {F}_M^{(-\alpha)} = [w_1, w_2, \ldots ,w_N] \), \( ({F}_M^{(\alpha)})^T = [\hat{w}_1, \hat{w}_2, \ldots, \hat{w}_N] \), and \( W_i = w_i \hat{w}_i^T \) for \( i = 1, 2, \ldots, N \). Then, for \( {\Gamma} = \text{diag}(h_1, h_2, \ldots, h_N) \), we can write:
\[
{F}_M^{(-\alpha)} {\Gamma} {F}_M^{(\alpha)} = \sum_{i=1}^{N} h_i W_i,\tag{88}
\]
That is, the objective is a function of \( h = [h_1 h_2 \ldots h_N]^T \), and \eqref{eq:65} can be transformed into the following problem:
\[
\min_h \mathbb{E} \left\{ \left\| \sum_{i=1}^{N} h_i W_i y - x \right\|_2^2 \right\},\tag{89}
\]
where we do not impose any restrictions on \( h \).

The optimal filter coefficients are obtained by the following equation:

\[
T h = q,\tag{90}
\]
where
\begin{align*}
T = \mathbb{E}\{S^H S\}, \quad q = \mathbb{E}\{S^H x\}, 
\\
\quad S = [S_1, S_2, \ldots, S_N], \quad S_i = W_{i}y.
\tag{91}
\end{align*}

Using linear system solution:

\[
h^{(\text{opt})} = T^{-1}q.\tag{92}
\]

Construct the filter matrix:
\begin{equation*}
{\Gamma}^{(\text{opt})} = \text{diag}(h^{(\text{opt})}).\tag{93}
\end{equation*}
The filtered output is:
\begin{equation*}
\tilde{x} = {B}^{-1/2}{F}_M^{(-\alpha)} {\Gamma}^{(\text{opt})} {F}_M^{(\alpha)}{B}^{1/2} y.\tag{94}
\end{equation*}

\begin{algorithm}[htbp]
\caption{Optimal Fractional-Domain Filtering for Clutter Suppression}\label{alg:optimal_filtering}
\begin{algorithmic}[1]
\State \textbf{Input:} Radar echo point cloud data $y$, fractional order $\alpha$
\State \textbf{Output:} Filtered target signal $\tilde{x}$
\State Compute $F_M^{(\alpha)}$ and $F_M^{(-\alpha)}$ using Algorithm \ref{alg:PMFHT_normalized_orthogonal} 
\State ${F}_M^{(-\alpha)} = [w_1, w_2\ldots w_N]$, $({F}_M^{(\alpha)})^T = [\hat{w}_1, \hat{w}_2\ldots\hat{w}_N]$, 
construct basis matrices: $W_i = w_i \hat{w}_i^T$ for $i = 1, \ldots, N$
\State \text{Compute cross-statistics matrices:}
 $\mathbf{S} = [S_1, \ldots, S_N]$ ,
where $S_i = W_i y$
\State \text{Build linear system:}
\hspace{0em} $T = \mathbb{E}\{S^H S\},$
\hspace{0em} $q = \mathbb{E}\{S^H x\}$
\State \text{Solve for optimal filter coefficients:}
\hspace{0em} $h^{(opt)} = T^{-1} q$, 
${\Gamma}^{(opt)} = \operatorname{diag}(h^{(opt)})$
\State \text{Apply optimal filtering:}
$\tilde{x} = {B}^{-1/2}F_M^{(-\alpha)} {\Gamma}^{(opt)} F_M^{(\alpha)}{B}^{1/2} y$
\end{algorithmic}
\end{algorithm}

\section{Simulation Experiments}
\label{sec:6}
\subsection{Point Cloud Data Encryption}
\label{sec:exp_encryption}

To evaluate the effectiveness and security of the proposed encryption algorithm, experiments are conducted on multiple 3D point cloud datasets, including both simulated data and publicly available models. The public datasets are sourced from the Princeton Shape Benchmark (\url{https://gfx.cs.princeton.edu/proj/sugcon/models/}) and the Stanford 3D Scanning Repository (\url{https://graphics.stanford.edu/data/3Dscanrep/}). Representative models such as the \textit{Swiss roll}, \textit{Bunny}, and \textit{Lion} are employed to demonstrate the performance of the proposed algorithms across diverse geometric topologies.

The encryption process follows the architecture described in Section~\ref{sec:4}. The secret key $K$ used for the simulation is configured as follows: the forward fractional orders are set to $\alpha_{fwd} = [0.35, 0.72, 0.15]$, and the secondary inverse orders are $\alpha_{inv} = [0.60, 0.20, 0.90]$. The chaotic system parameters for the Hénon map are chosen as $(a=1.4, b=0.3)$ with initial values $(u_0=0.12, v_0=0.1)$. These parameters collectively form an extremely large key space, providing robust resistance against brute-force attacks.

The experimental results are illustrated in Fig.~\ref{fig:encryption_results_main}. As observed in the second column of the figure, the original geometric structures of the point clouds are completely masked after encryption, appearing as isotropic, noise-like distributions in the 3D space. This indicates that the combination of multi-order PMFHT and chaotic spectral diffusion effectively disrupts the spatial correlation of the coordinates.

To verify the sensitivity of the algorithm to the fractional orders, a decryption test with an incorrect key is performed. While the correct decryption (fourth column) perfectly restores the original geometry with a relative error of approximately $10^{-12}$, the error-decryption attempt (third column) utilizes a modified inverse order set $\alpha_{inv}' = [0.10, 0.20, 0.90]$. Despite the deviation occurring in only a single parameter ($p_{x,2}$ changed from $0.60$ to $0.10$), the recovered data remains a disordered cloud of points with no discernible features. This high sensitivity to fractional orders, coupled with the non-linear properties of the chaotic phase masks, ensures that the proposed scheme provides a high level of privacy protection for 3D point cloud data during transmission and storage.
\begin{figure}[htbp] 
	\centering
	
	\begin{minipage}{0.7\textwidth}
		\centering
		\includegraphics[width=\linewidth]{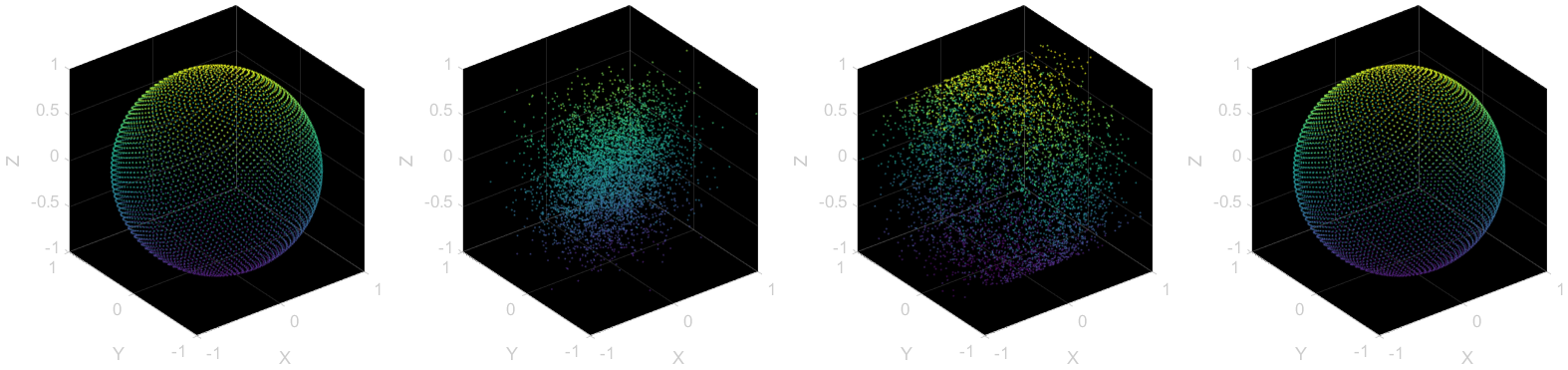}
	\end{minipage}
	\begin{minipage}{0.04\textwidth}
		\footnotesize (a)
	\end{minipage}
	\vspace{1mm}
	
	\begin{minipage}{0.7\textwidth}
		\centering
		\includegraphics[width=\linewidth]{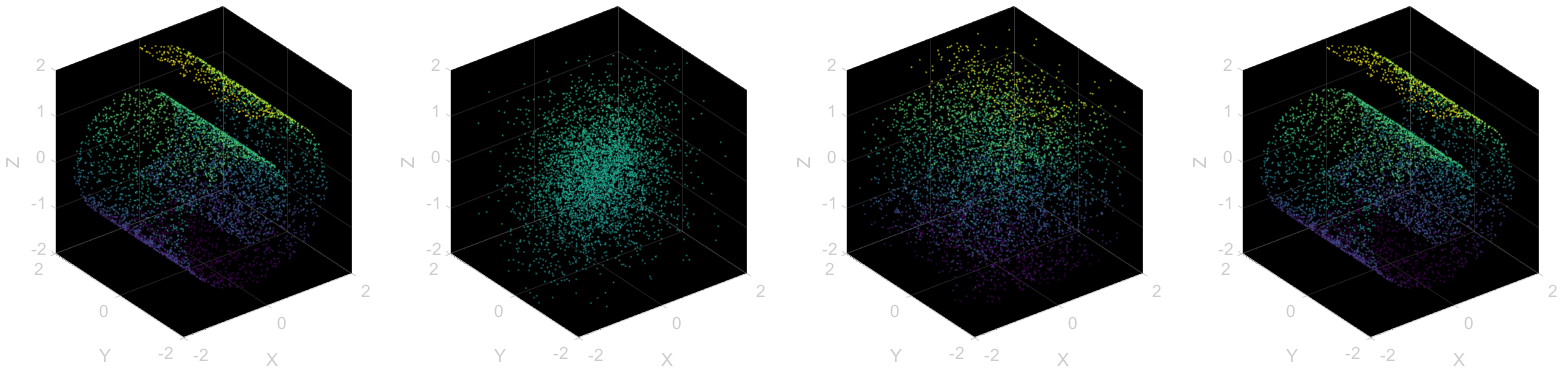}
	\end{minipage}
	\begin{minipage}{0.04\textwidth}
		\footnotesize (b)
	\end{minipage}
	\vspace{1mm}
	
	\begin{minipage}{0.7\textwidth}
		\centering
		\includegraphics[width=\linewidth]{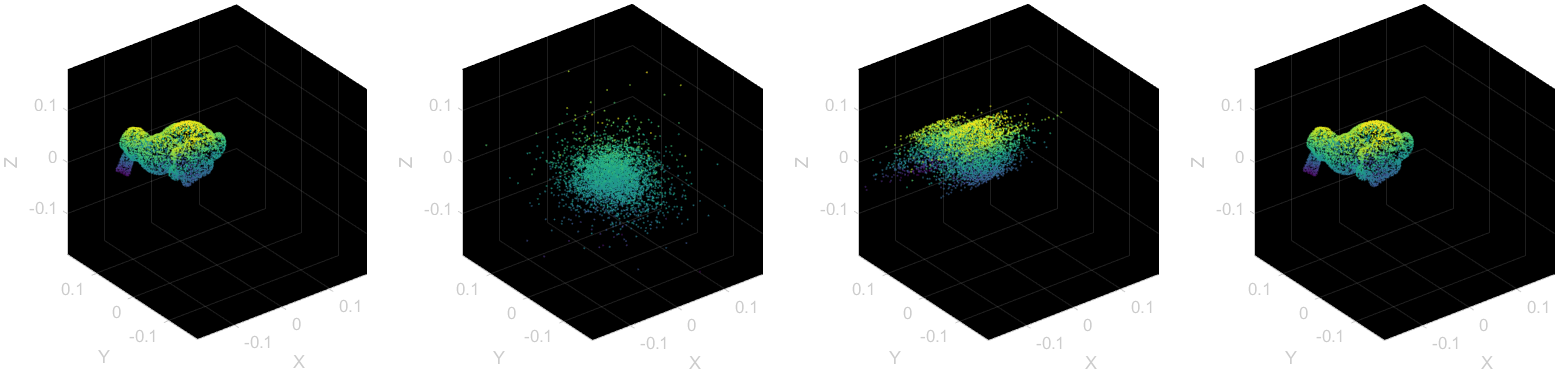}
	\end{minipage}
	\begin{minipage}{0.04\textwidth}
		\footnotesize (c)
	\end{minipage}
	\vspace{1mm}
	
	\begin{minipage}{0.7\textwidth}
		\centering
		\includegraphics[width=\linewidth]{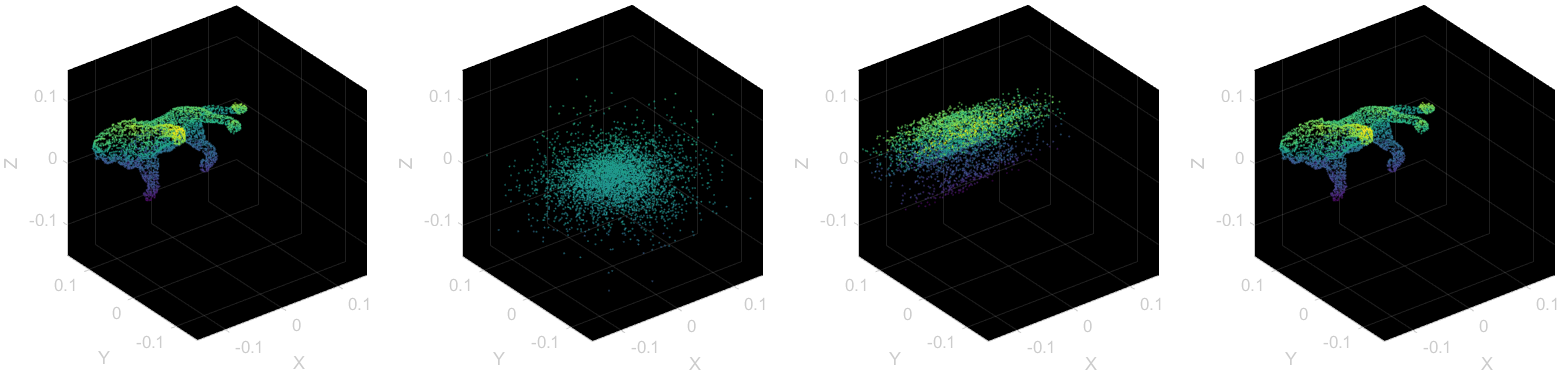}
	\end{minipage}
	\begin{minipage}{0.04\textwidth}
		\footnotesize (d)
	\end{minipage}
	\vspace{1mm}
	
	\begin{minipage}{0.7\textwidth}
		\centering
		\includegraphics[width=\linewidth]{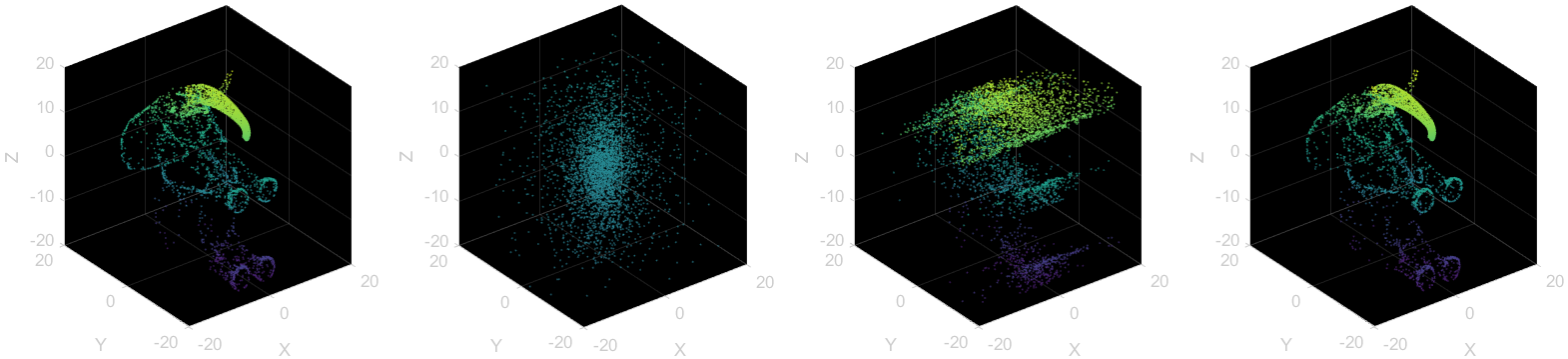}
	\end{minipage}
	\begin{minipage}{0.04\textwidth}
		\footnotesize (e)
	\end{minipage}
	\vspace{1mm}
	
	\begin{minipage}{0.7\textwidth}
		\centering
		\includegraphics[width=\linewidth]{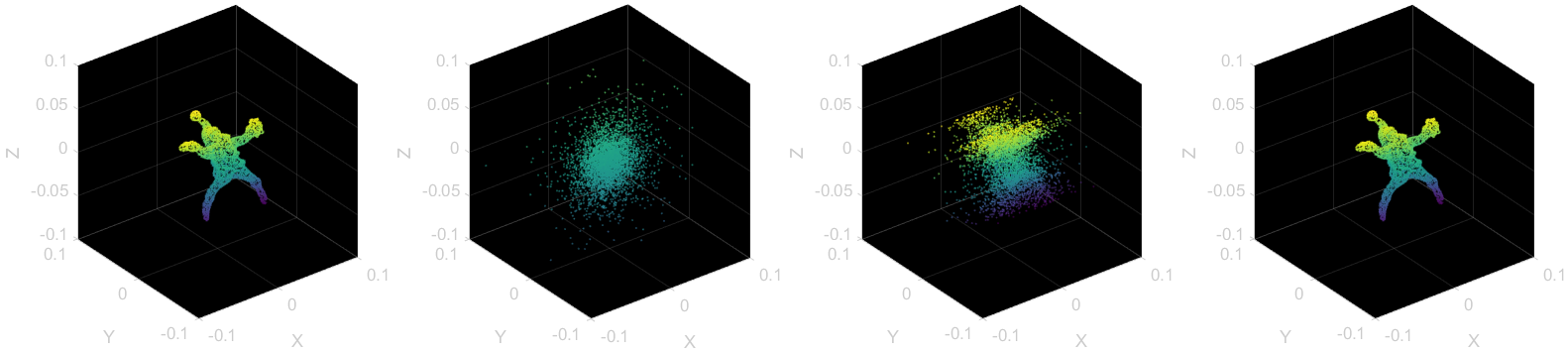}
	\end{minipage}
	\begin{minipage}{0.04\textwidth}
		\footnotesize (f)
	\end{minipage}
	
	\vspace{2mm} 
	
	\begin{minipage}{0.7\textwidth}
		\centering\footnotesize 
		\begin{minipage}{0.24\linewidth}\centering Original\\point cloud\end{minipage}\hfill
		\begin{minipage}{0.24\linewidth}\centering Encrypted\\point cloud\end{minipage}\hfill
		\begin{minipage}{0.24\linewidth}\centering Error-decrypted\\point cloud\end{minipage}\hfill
		\begin{minipage}{0.24\linewidth}\centering Decrypted\\point cloud\end{minipage}
	\end{minipage}
	\hspace{0.04\textwidth} 
	
	\caption{Experimental results of the proposed PMFHT-domain encryption for various 3D models: (a) Sphere, (b) Swiss roll, (c) Bunny, (d) Lion, (e) Elephant, (f) Santa}
	\label{fig:encryption_results_main}
\end{figure}
\subsection{IPIX Sea Clutter Suppression}
To validate the effectiveness of the proposed PMFHT framework for maritime target detection, we conducted experiments on the IPIX radar dataset. 
A synthetic floating target, moving with a constant Doppler frequency corresponding to a radial velocity of 2.58~m/s, was injected into the fourth range cell of the clutter data to simulate a realistic detection scenario. 
For these experiments, we used the dataset file \text{19980205\_170935\_ANTSTEP.CDF} from the McMaster IPIX radar database~\cite{ipix}, which contains pure sea clutter returns without any real target.
The target echo is modeled as $s_t = A {p}$, where $A$ represents the target signal amplitude and ${p}$ is the Doppler steering vector defined by 
${p} = \frac{1}{\sqrt{M}}[1, \exp(-j2\pi f_d/f_r), \dots, \exp(-j2\pi (M-1) f_d/f_r)]^T$, 
with $j$ denoting the imaginary unit, $f_r$ the pulse repetition frequency, $M$ the number of transmitted pulses, and $f_d = 2v/\lambda$ the Doppler frequency corresponding to a radial velocity $v$ and wavelength $\lambda$. Detailed parameter settings are shown in Table~\ref{table1}.
\begin{figure}[htbp]
	\centering
	\subfloat[(a)\label{fig:sub2_orimesh}]{%
		\includegraphics[width=0.32\textwidth]{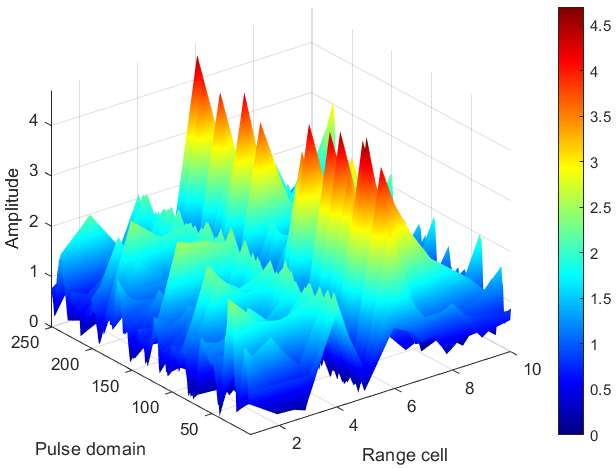}%
	}
	\hspace{1cm}
	\subfloat[(b)\label{fig:sub3_orimesh}]{%
		\includegraphics[width=0.32\textwidth]{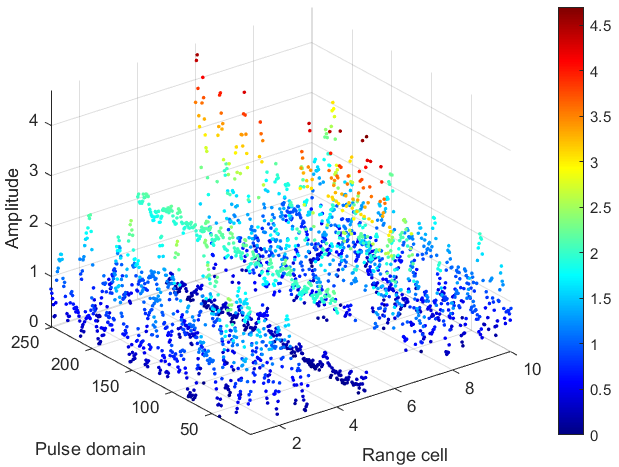}%
	}
	\caption{Visualization of the raw IPIX data in the range–pulse domain. (a) Mesh representation. (b) Point cloud representation.}
	\label{fig:7}
\end{figure}
\begin{figure}[htbp]
	\centering
	
	\subfloat[(a)\label{fig:sub2_filteredmesh}]{%
		\includegraphics[width=0.32\textwidth]{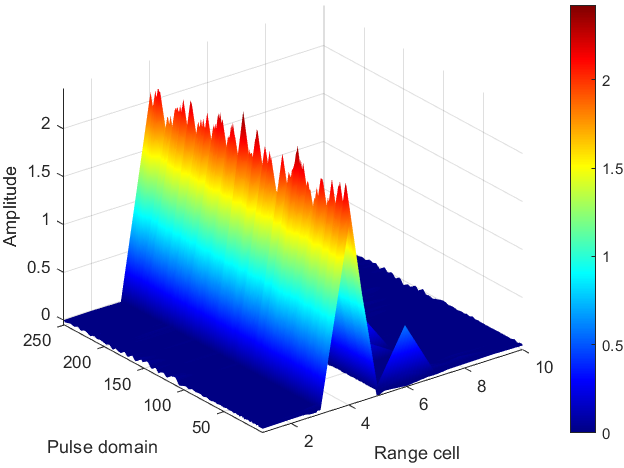}%
	}
	\hspace{1cm} 
	\subfloat[(b)\label{fig:sub3_filteredmesh}]{%
		\includegraphics[width=0.32\textwidth]{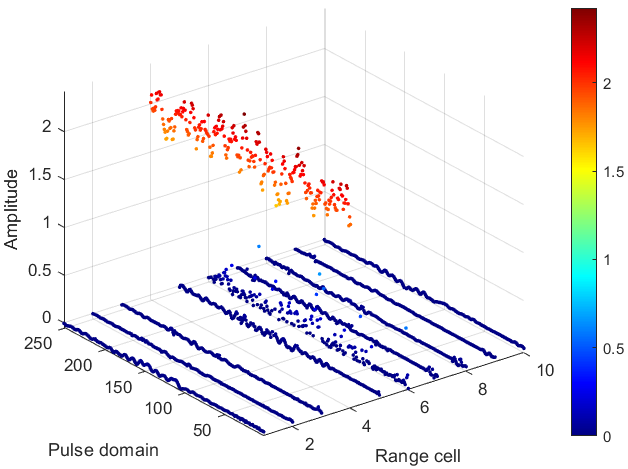}%
	}
	
	\caption{Target detection results using PMFHT optimal filtering. (a) Mesh representation. (b) Point cloud representation.}
	\label{fig:8}
\end{figure}

Fig.~\ref{fig:7} illustrates the raw IPIX data visualization in the range-pulse domain. The front-view surface plot in Fig.~\ref{fig:7}(a) clearly depicts the spatio-temporal characteristics of sea clutter, while Fig.~\ref{fig:7}(b) shows the 3D point cloud representation used as input to our PMFHT framework.
The experimental procedure is as follows. First, the raw radar echo data were preprocessed to form a 3D point cloud, where each point corresponds to a range-pulse-power sample. The point cloud was then modeled as a discrete manifold, and the LBO was constructed following the methodology described in Section~\ref{sec:2}. The PMHB and PMFHT were computed for different fractional orders $\alpha$. Fig.~\ref{fig:spectral_of_xyz} displays the fractional harmonic spectra of the unfiltered data across the x, y, and z spectral dimensions, revealing the complex geometric structure of sea clutter in the fractional domain.
\begin{table}[!t]
	\renewcommand{\arraystretch}{1.0} 
	\centering
	\caption{ \\[1mm] IPIX Radar System and Experimental Configuration}
	\centering
	\begin{tabular}{lc}
		\toprule
		\textbf{Parameter} & \textbf{Value} \\
		\midrule
		Carrier Frequency & 9.39 GHz \\
		Wavelength & 0.03 m \\
		Peak Power & 8 kW \\
		Antenna Type & Dual-Polarized Parabolic \\
		Antenna Size & 2.4 m \\
		Antenna Gain & 45.7 dB \\
		Beamwidth & 1.1$^\circ$ \\
		Polarization & Dual Linear (full matrix) \\
		Receiver Channels & 2 \\
		Number of Range Cells Used & 10 \\
		Number of Pulses Used & 251 \\
		Target Range Cell & 4 \\
		Target Velocity & 2.58 m/s \\
		Pulse Repetition Frequency & 1075 Hz \\
		\bottomrule
	\end{tabular}
	\label{table1}
\end{table}
\begin{figure}[htbp]
	\centering
	
	\subfloat[(a)\label{fig:sub1_origin}]{%
		\includegraphics[width=0.31\columnwidth]{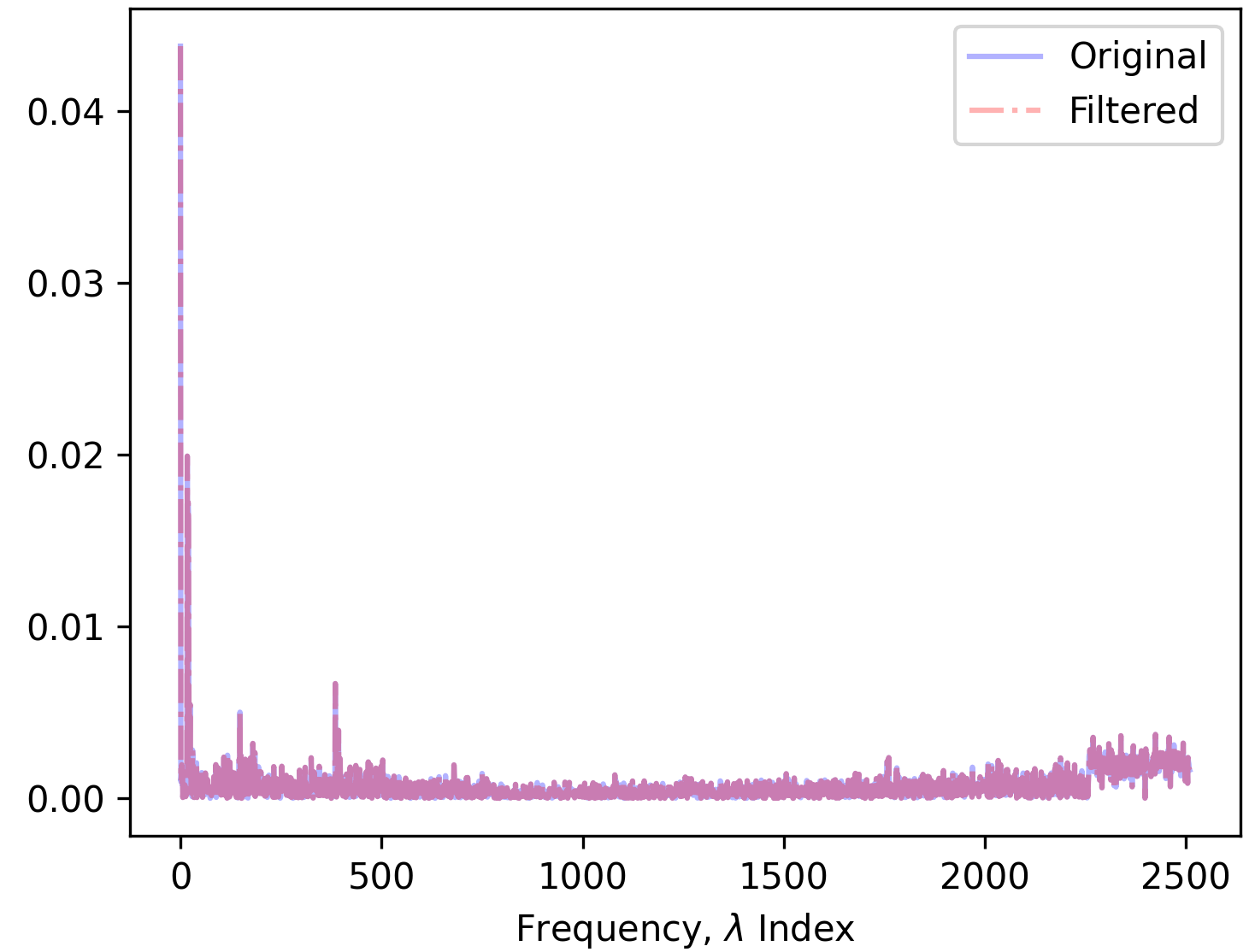}%
	}
	\hfill
	\subfloat[(b)\label{fig:sub2_origin}]{%
		\includegraphics[width=0.31\columnwidth]{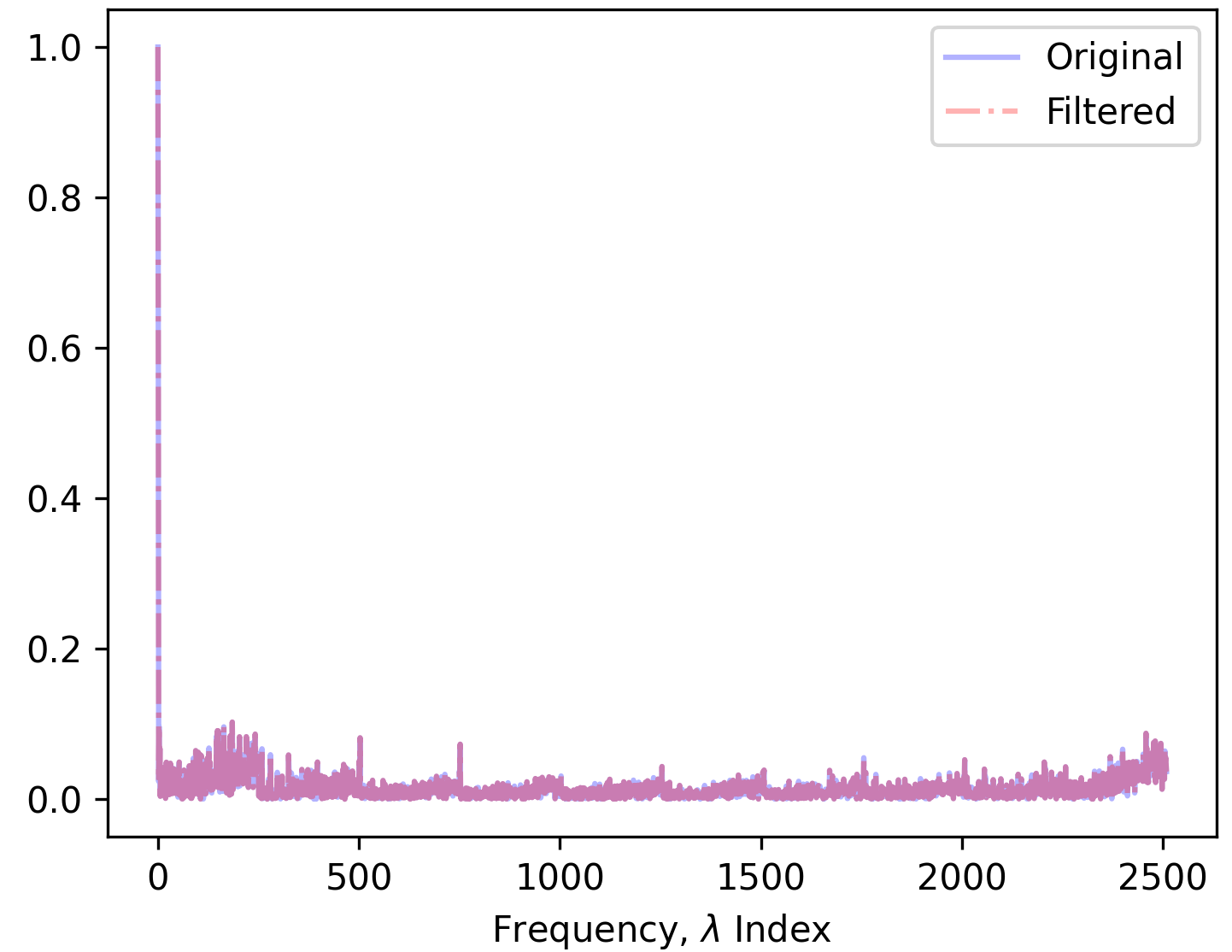}%
	}
	\hfill
	\subfloat[(c)\label{fig:sub3_origin}]{%
		\includegraphics[width=0.31\columnwidth]{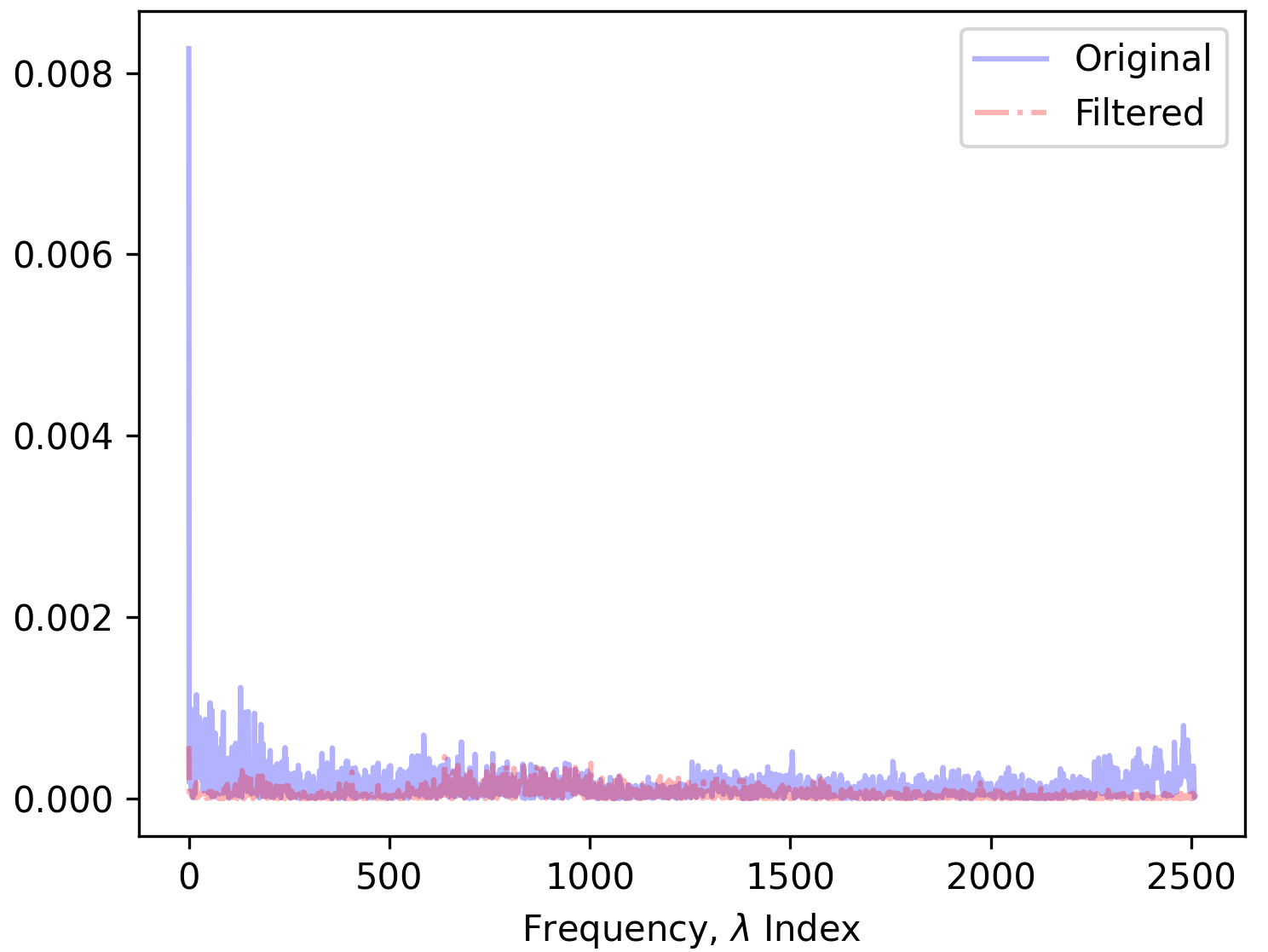}%
	}
	
	\caption{PMFHT spectra of IPIX point cloud data. (a) x-spectrum. (b) y-spectrum. (c) z-spectrum.}
	\label{fig:spectral_of_xyz}
\end{figure}
\begin{figure}[htbp]
	\centering
	\subfloat[(a)\label{fig:alpha_mse}]{%
		\includegraphics[width=0.31\columnwidth]{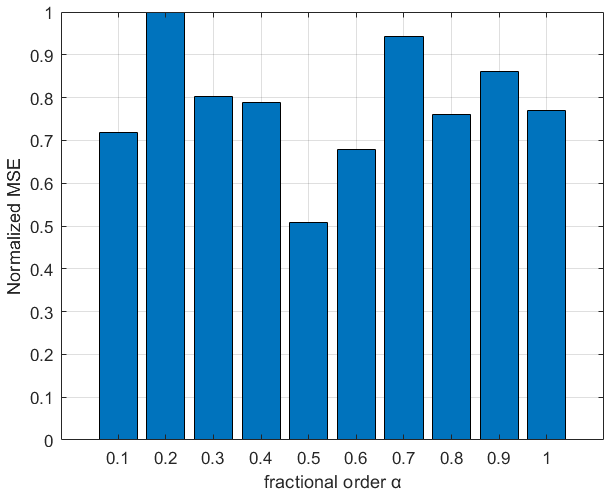}%
	}
	\hspace{1cm}
	\subfloat[(b)\label{fig:scr_pd}]{%
		\includegraphics[width=0.31\columnwidth]{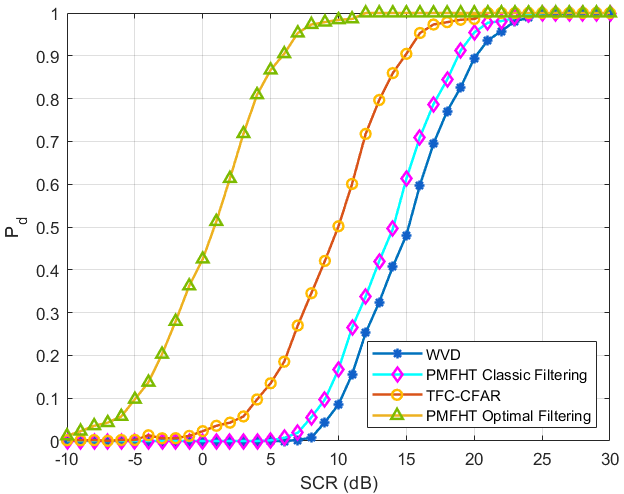}%
	}
	\caption{(a) Normalized MSE for different values of $\alpha$. (b) Detection performance of different methods on measured data.}
	\label{fig:11}
\end{figure}
Next, the optimal fractional domain filtering method derived in Section~\ref{sec:5} was applied to suppress sea clutter and achieve target detection. In constructing the optimal fractional domain filter, this paper employs a sliding window local estimator. For each range cell, averaging the pulse-dimension samples of that cell and its two neighbors provides a compact estimate of the target signal components required for the optimal PMFHT filter. 
Fig.~\ref{fig:8} presents the target detection results obtained using the PMFHT optimal filter, where Fig.~\ref{fig:8}(a) shows significant clutter suppression in the front view, and Fig.~\ref{fig:8}(b) illustrates the enhanced target visibility in the filtered point cloud. The corresponding fractional harmonic spectra of the PMFHT-filtered data are shown in Fig.~\ref{fig:spectral_of_xyz}, where clutter components are effectively attenuated while preserving target information across all spectral dimensions. It can be observed that the z-component spectrum exhibits a significant change after filtering, whereas the x-component and y-component spectra remain largely unchanged. This is because the z-component reflects the energy dimension of the signal, and the objective of the proposed method is essentially to suppress the sea-clutter energy while preserving the target energy. Therefore, the filtering applied to the z-component spectrum plays a dominant role in the overall target detection performance.

Fig.~\ref{fig:11}(a) shows the normalized mean square error (NMSE) of the reconstructed target signal under different fractional orders $\alpha$. It can be seen that the MSE is minimized when the fractional order $\alpha = 0.5$, indicating that PMFHT provides a more discriminative spectral representation compared to the traditional PMHT ($\alpha = 1$), and can better separate the target from clutter. Performance is evaluated in terms of detection probability ($P_d$) versus SCR curves, with results averaged over 200 Monte Carlo trials to ensure statistical reliability. Fig.~\ref{fig:11}(b) compares the detection performance of the proposed optimal PMFHT-based filter with several baseline methods, including the traditional PMFHT bandpass filter, the Wigner-Ville distribution (WVD) detection method, and the time-frequency correlation CFAR detection method \cite{cao2021nonstationary}. The results show that the PMFHT detector achieves a higher detection probability, especially in low SCR scenarios. This confirms that fractional tuning in PMFHT can achieve flexible and effective clutter suppression, especially suitable for non-stationary sea spikes.

In summary, the application of PMFHT to maritime target detection demonstrates its ability to suppress sea clutter and enhance target visibility in complex sea clutter environments. By utilizing the fractional spectral domain, this method achieves excellent performance in real-world data scenarios, providing a robust and efficient solution for radar-based maritime surveillance.

\section{Conclusion}
\label{sec:7}
Based on the theory of Riemannian manifolds, this paper proposes a fractional harmonic transform for 3D point cloud data, extending the spectral representation of point clouds to multi-order fractional domains and establishing a complete theoretical framework, including definitions, properties, convolution, correlation, and sampling. Combined with the PMFHT, two algorithms are developed for point cloud encryption and point cloud clutter suppression, whose effectiveness is verified by real-world experimental data. In summary, the proposed PMFHT provides a novel and powerful tool for point cloud processing and exhibits promising application potential in fields such as neural networks, autonomous driving, LiDAR, and data security.

\section*{A\lowercase{uthor} C\lowercase{ontributions}}
\textbf{Jiamian Li}: methodology; software; visualization; writing - original draft. \textbf{Bing-Zhao Li}: funding acquisition; methodology; resources; writing - review \& editing.

\section*{A\lowercase{cknowledgements}}
This work was supported by grants from Natural Science Foundation of Beijing Municipality [No. 4242011].

\section*{C\lowercase{onflicts} \lowercase{of} I\lowercase{nterest}}
This work does not have any conflicts of interest.

\section*{D\lowercase{ata} A\lowercase{vailability} S\lowercase{tatement}}
Data available on request from the authors.

\bibliographystyle{wileyNJD-Chicago}
\bibliography{wileyNJD-Chicago}

\begin{thebibliography}{31}
\providecommand{\natexlab}[1]{#1}
\providecommand{\url}[1]{\texttt{#1}}
\expandafter\ifx\csname urlstyle\endcsname\relax
  \providecommand{\doi}[1]{doi: #1}\else
  \providecommand{\doi}{doi: \begingroup \urlstyle{rm}\Url}\fi

\bibitem[\protect\citeauthoryear{Han, Jin, Wang, Jiang, Gao, and Xiao}{Han
  et~al.}{}]{han2017review}
X.-F. Han, J.~S. Jin, M.-J. Wang, W.~Jiang, L.~Gao, and L.~Xiao.
\newblock
\newblock ``A review of algorithms for filtering the 3D point cloud.''  {\it
  Signal Processing: Image Communication\/}57 (2017): 103--112.

\bibitem[\protect\citeauthoryear{Zhang, Wang, Tian, Lu, Zhang, Ning, and
  Bai}{Zhang et~al.}{}]{zhang2023deep}
H.~Zhang, C.~Wang, S.~Tian, B.~Lu, L.~Zhang, X.~Ning, and X.~Bai.
\newblock
\newblock ``Deep learning-based 3D point cloud classification: A systematic
  survey and outlook.''  {\it Displays\/}79 (2023): 102456.

\bibitem[\protect\citeauthoryear{Guo, Wang, Hu, Liu, Liu, and Bennamoun}{Guo
  et~al.}{}]{guo2020deep}
Y.~Guo, H.~Wang, Q.~Hu, H.~Liu, L.~Liu, and M.~Bennamoun.
\newblock
\newblock ``Deep learning for 3d point clouds: A survey.''  {\it IEEE
  Transactions on Pattern Analysis and Machine Intelligence\/}~43, no. 12
  (2020): 4338--4364.

\bibitem[\protect\citeauthoryear{Lee}{Lee}{}]{lee2003smooth}
J.~M. Lee 2003.
\newblock ``Smooth manifolds.''   In {\it Introduction to smooth manifolds},
  1--29.
\newblock Springer.

\bibitem[\protect\citeauthoryear{Martin and Watson}{Martin and
  Watson}{}]{MARTIN2011427}
S.~Martin and J.-P. Watson.
\newblock
\newblock ``Non-manifold surface reconstruction from high-dimensional point
  cloud data.''  {\it Computational Geometry\/}~44, no. 8 (2011): 427--441.

\bibitem[\protect\citeauthoryear{Lee, Kim, Choi, and Park}{Lee
  et~al.}{}]{lee2022statistical}
Y.~Lee, S.~Kim, J.~Choi, and F.~Park. 2022.
\newblock ``A statistical manifold framework for point cloud data.''   In {\it
  International Conference on Machine Learning},   12378--12402.
\newblock PMLR.

\bibitem[\protect\citeauthoryear{Pauly and Gross}{Pauly and
  Gross}{}]{pauly2001spectral}
M.~Pauly and M.~Gross. 2001.
\newblock ``Spectral processing of point-sampled geometry.''   In {\it
  Proceedings of the 28th annual conference on Computer graphics and
  interactive techniques},   379--386.

\bibitem[\protect\citeauthoryear{Vallet and L{\'e}vy}{Vallet and
  L{\'e}vy}{}]{vallet2008spectral}
B.~Vallet and B.~L{\'e}vy. 2008.
\newblock ``Spectral geometry processing with manifold harmonics.''   In {\it
  Computer Graphics Forum}, Vol~27,   251--260.
\newblock Wiley Online Library.

\bibitem[\protect\citeauthoryear{Liu, Prabhakaran, and Guo}{Liu
  et~al.}{}]{liu2012point}
Y.~Liu, B.~Prabhakaran, and X.~Guo.
\newblock
\newblock ``Point-based manifold harmonics.''  {\it IEEE Transactions on
  visualization and computer graphics\/}~18, no. 10 (2012): 1693--1703.

\bibitem[\protect\citeauthoryear{Bultheel and Mart{\'i}nez~Sulbaran}{Bultheel
  and Mart{\'i}nez~Sulbaran}{}]{BULTHEEL2004182}
A.~Bultheel and H.~E. Mart{\'i}nez~Sulbaran.
\newblock
\newblock ``Computation of the fractional Fourier transform.''  {\it Applied
  and Computational Harmonic Analysis\/}~16, no. 3 (2004): 182--202.

\bibitem[\protect\citeauthoryear{Chen, Fu, Grafakos, and Wu}{Chen
  et~al.}{}]{CHEN202171}
W.~Chen, Z.~Fu, L.~Grafakos, and Y.~Wu.
\newblock
\newblock ``Fractional Fourier transforms on Lp and applications.''  {\it
  Applied and Computational Harmonic Analysis\/}55 (2021): 71--96.

\bibitem[\protect\citeauthoryear{Fu, Grafakos, Lin, Wu, and Yang}{Fu
  et~al.}{}]{FU2023211}
Z.~Fu, L.~Grafakos, Y.~Lin, Y.~Wu, and S.~Yang.
\newblock
\newblock ``Riesz transform associated with the fractional Fourier transform
  and applications in image edge detection.''  {\it Applied and Computational
  Harmonic Analysis\/}66 (2023): 211--235.

\bibitem[\protect\citeauthoryear{Tao, Meng, and Wang}{Tao et~al.}{}]{5551197}
R.~Tao, X.~Meng, and Y.~Wang.
\newblock
\newblock ``Image Encryption With Multiorders of Fractional Fourier
  Transforms.''  {\it IEEE Transactions on Information Forensics and
  Security\/}~5, no. 4 (2010): 734--738.

\bibitem[\protect\citeauthoryear{Farah, Guesmi, Kachouri, and Samet}{Farah
  et~al.}{}]{FARAH2020105777}
M.A.~Ben Farah, R.~Guesmi, A.~Kachouri, and M.~Samet.
\newblock
\newblock ``A novel chaos based optical image encryption using fractional
  Fourier transform and DNA sequence operation.''  {\it Optics $\&$ Laser
  Technology\/}121 (2020): 105777.

\bibitem[\protect\citeauthoryear{Liu, Liu, Xie, Jiang, Ye, Song, Chai, Liu,
  Feng, and Yuan}{Liu et~al.}{}]{Liu:23}
B.~Liu, Y.~Liu, Y.~Xie, X.~Jiang, Y.~Ye, T.~Song, J.~Chai, M.~Liu, M.~Feng, and
  H.~Yuan.
\newblock
\newblock ``Privacy protection for 3D point cloud classification based on an
  optical chaotic encryption scheme.''  {\it Optics Express\/}~31, no. 5
  (2023): 8820--8843.

\bibitem[\protect\citeauthoryear{Ru, Xu, He, and Shui}{Ru
  et~al.}{}]{ru2023marine}
H.~Ru, S.~Xu, Q.~He, and P.~Shui.
\newblock
\newblock ``Marine small floating target detection method based on fusion
  weight and graph dynamic attention mechanism.''  {\it IEEE Transactions on
  Geoscience and Remote Sensing\/}61 (2023): 1--11.

\bibitem[\protect\citeauthoryear{Su, Chen, Guan, Huang, Wang, and Xue}{Su
  et~al.}{}]{su2024radar}
N.~Su, X.~Chen, J.~Guan, Y.~Huang, X.~Wang, and Y.~Xue.
\newblock
\newblock ``Radar maritime target detection via spatial--temporal feature
  attention graph convolutional network.''  {\it IEEE Transactions on
  Geoscience and Remote Sensing\/}62 (2024): 1--15.

\bibitem[\protect\citeauthoryear{Hua, Ono, Peng, Cheng, and Wang}{Hua
  et~al.}{}]{hua2021target}
X.~Hua, Y.~Ono, L.~Peng, Y.~Cheng, and H.~Wang.
\newblock
\newblock ``Target detection within nonhomogeneous clutter via total Bregman
  divergence-based matrix information geometry detectors.''  {\it IEEE
  Transactions on Signal Processing\/}69 (2021): 4326--4340.

\bibitem[\protect\citeauthoryear{Cao, Cheng, Wu, and Wang}{Cao
  et~al.}{}]{cao2021nonstationary}
X.~Cao, Y.~Cheng, H.~Wu, and H.~Wang.
\newblock
\newblock ``Nonstationary moving target detection in spiky sea clutter via
  time-frequency manifold.''  {\it IEEE Geoscience and Remote Sensing
  Letters\/}19 (2021): 1--5.

\bibitem[\protect\citeauthoryear{Li, Chen, and Li}{Li et~al.}{}]{li2025novel}
J.-M. Li, J.-Y. Chen, and B.-Z. Li.
\newblock
\newblock ``A novel STAP algorithm via volume cross-correlation function on the
  Grassmann manifold.''  {\it Digital Signal Processing\/}162 (2025): 105164.

\bibitem[\protect\citeauthoryear{Chen, Guan, Bao, and He}{Chen
  et~al.}{}]{chen2013detection}
X.~Chen, J.~Guan, Z.~Bao, and Y.~He.
\newblock
\newblock ``Detection and extraction of target with micromotion in spiky sea
  clutter via short-time fractional Fourier transform.''  {\it IEEE
  Transactions on Geoscience and Remote Sensing\/}~52, no. 2 (2013):
  1002--1018.

\bibitem[\protect\citeauthoryear{Gao, Tao, and Kang}{Gao
  et~al.}{}]{gao2021weak}
C.~Gao, R.~Tao, and X.~Kang.
\newblock
\newblock ``Weak target detection in the presence of sea clutter using
  radon-fractional Fourier transform canceller.''  {\it IEEE Journal of
  Selected Topics in Applied Earth Observations and Remote Sensing\/}14 (2021):
  5818--5830.

\bibitem[\protect\citeauthoryear{Belkin and Niyogi}{Belkin and
  Niyogi}{}]{BELKIN20081289}
M.~Belkin and P.~Niyogi.
\newblock
\newblock ``Towards a theoretical foundation for Laplacian-based manifold
  methods.''  {\it Journal of Computer and System Sciences\/}~74, no. 8 (2008):
  1289--1308.

\bibitem[\protect\citeauthoryear{Belkin, Sun, and Wang}{Belkin
  et~al.}{}]{Belkin2009}
M.~Belkin, J.~Sun, and Y.~Wang. 2009.
\newblock ``Constructing the Laplace operator from point clouds in
  $\mathbb{R}^d$.''   In {\it Proceedings of the Twentieth Annual ACM-SIAM
  Symposium on Discrete Algorithms},   1031--1040.
\newblock Society for Industrial and Applied Mathematics.

\bibitem[\protect\citeauthoryear{Kamalakkannan and Roopkumar}{Kamalakkannan and
  Roopkumar}{}]{Kamalakkannan}
R.~Kamalakkannan and R.~Roopkumar.
\newblock
\newblock ``Multidimensional fractional Fourier transform and generalized
  fractional convolution.''  {\it Integral Transforms and Special
  Functions\/}~31, no. 2 (2020): 152--165.

\bibitem[\protect\citeauthoryear{Su, Tao, and Kang}{Su
  et~al.}{}]{su2019analysis}
X.~Su, R.~Tao, and X.~Kang.
\newblock
\newblock ``Analysis and comparison of discrete fractional Fourier
  transforms.''  {\it Signal Processing\/}160 (2019): 284--298.

\bibitem[\protect\citeauthoryear{Alika{\c{s}}ifo{\u{g}}lu, Kartal, and
  Ko{\c{c}}}{Alika{\c{s}}ifo{\u{g}}lu et~al.}{}]{alikacsifouglu2024graph}
T.~Alika{\c{s}}ifo{\u{g}}lu, B.~Kartal, and A.~Ko{\c{c}}.
\newblock
\newblock ``Graph Fractional Fourier Transform: A Unified Theory.''  {\it IEEE
  Transactions on Signal Processing\/}72 (2024): 3834--3850.

\bibitem[\protect\citeauthoryear{Wang, Li, and Cheng}{Wang
  et~al.}{}]{wang2017fractional}
Y.-Q. Wang, B.-Z. Li, and Q.-Y. Cheng. 2017.
\newblock ``The fractional Fourier transform on graphs.''   In {\it 2017
  Asia-Pacific Signal and Information Processing Association Annual Summit and
  Conference (APSIPA ASC)},   105--110.
\newblock IEEE.

\bibitem[\protect\citeauthoryear{Kutay, Ozaktas, Ankan, and Onural}{Kutay
  et~al.}{}]{kutay1997optimal}
A.~Kutay, H.~M. Ozaktas, O.~Ankan, and L.~Onural.
\newblock
\newblock ``Optimal filtering in fractional Fourier domains.''  {\it IEEE
  Transactions on Signal Processing\/}~45, no. 5 (1997): 1129--1143.

\bibitem[\protect\citeauthoryear{Ozturk, Ozaktas, Gezici, and Ko{\c{c}}}{Ozturk
  et~al.}{}]{9435933}
C.~Ozturk, H.~M. Ozaktas, S.~Gezici, and A.~Ko{\c{c}}.
\newblock
\newblock ``Optimal Fractional Fourier Filtering for Graph Signals.''  {\it
  IEEE Transactions on Signal Processing\/}69 (2021): 2902--2912.

\bibitem[\protect\citeauthoryear{Haykin}{Haykin}{}]{ipix}
S.~Haykin.
\newblock ``\textit{The {McMaster} {IPIX} {Radar} {Sea} {Clutter}
  {Database}}.''   \url{http://soma.ece.mcmaster.ca/ipix/index.html}, .

\end{thebibliography}

\end{document}